\definecolor{citation}{rgb}{0.2,0.58,0.2} 
\definecolor{formula}{rgb}{0.1,0.2,0.6}
\definecolor{url}{rgb}{0.3,0,0.5} 
\newtheorem{thm}{Theorem}[section]
\newtheorem{corollary}[thm]{Corollary}
\newtheorem{lemma}[thm]{Lemma}
\theoremstyle{definition}
\newtheorem{defn}[thm]{Definition}
\theoremstyle{remark}
\newtheorem{rem}[thm]{Remark}
\numberwithin{equation}{section}
\DeclareMathOperator*{\osc}{osc}
\def\Xint#1{\mathchoice
      {\XXint\displaystyle\textstyle{#1}}%
      {\XXint\textstyle\scriptstyle{#1}}%
      {\XXint\scriptstyle\scriptscriptstyle{#1}}%
      {\XXint\scriptscriptstyle\scriptscriptstyle{#1}} %
\!\int}
   \def\XXint#1#2#3{{\setbox0=\hbox{$#1{#2#3}{\int}$}
        \vcenter{\hbox{$#2#3$}}\kern-.5\wd0}}
   \def\dashint{\Xint-}
\def\dxy{\,{\rm d}x{\rm d}y}
\def\eps{\varepsilon}
\DeclareRobustCommand*{\bfseries}{%
  \not@math@alphabet\bfseries\mathbf
  \fontseries\bfdefault\selectfont
  \boldmath
}
\newlength{\defbaselineskip}
\newcommand{\setlinespacing}[1]
           {\setlength{\baselineskip}{#1 \defbaselineskip}}
\title
[Local behavior of fractional $p$-minimizers]
{Local behavior of fractional $p$-minimizers}
\author[A. Di Castro]{Agnese Di Castro}
\email[Agnese Di Castro]{\href{mailto:dicastro@mail.dm.unipi.it}{dicastro@mail.dm.unipi.it}} 
\author[T. Kuusi]{Tuomo Kuusi}
\email[Tuomo Kuusi]{\href{mailto:tuomo.kuusi@aalto.fi}{tuomo.kuusi@aalto.fi}}
\author[G. Palatucci]{Giampiero Palatucci}
\email[Giampiero Palatucci]{\href{mailto:giampiero.palatucci@unipr.it}{giampiero.palatucci@unipr.it}}
\address[A. Di Castro]
{Dipartimento di Matematica e Informatica, Universit\`a degli Studi di Parma
\\ Campus - Parco Area delle Scienze~53/A
\\ 43124 Parma, Italy; \newline Dipartimento di Matematica, Universit\`a degli Studi di Pisa
\\ Largo B. Pontecorvo 5
\\ 56127 Pisa, Italy}
\address[G. Palatucci]
{Dipartimento di Matematica e Informatica, Universit\`a degli Studi di Parma
\\ Campus - Parco Area delle Scienze~53/A
\\ 43124 Parma, Italy; \newline
SISSA 
 \\ Via Bonomea 256 \\ 34136 Trieste, Italy
}
\address[T. Kuusi]
{Department of Mathematics and Systems Analysis, Aalto University
\\ P.O. Box 1100
\\ 00076 Aalto, Finland
}
\begin{document}

\subjclass[2010]{Primary 35D30, 35B45;
Secondary 35B05, 35R05, 47G20, 60J75}  

\keywords{Quasilinear nonlocal operators, fractional Sobolev spaces, H\"older regularity, Caccioppoli estimates, singular perturbations\vspace{0.5mm}}


\begin{abstract}
\small
We extend the De Giorgi-Nash-Moser theory to nonlocal, possibly degenerate 
integro-differential operators.
\end{abstract}

\maketitle

\begin{center}
 \rule{11.3cm}{0.6pt}\\[-0.0cm] 
{\sc {\small To appear in}\, {\it 
Ann. Inst. H. Poincare Anal. Non Lineaire}. 
}
\\[-0.19cm] \rule{11.3cm}{0.6pt}
\end{center}
\vspace{4mm}

\setcounter{tocdepth}{2}
 \setlinespacing{1.03}

\section{Introduction}

The aim of this paper is to develop localization techniques in order to establish regularity results for nonlocal integro-differential operators and minimizers  
of fractional order $s\in (0,1)$ and summability $p>1$. 
Let ${\Omega}$ be a bounded domain and let~$g$ be a function in the fractional Sobolev space $W^{s,p}({\mathds R}^n)$. We shall prove general local regularity estimates for the minimizers~$u$, where $u$ is minimizing the functional 
\begin{equation}\label{def_f1}
{\displaystyle}
{\mathcal F} (v):= \int_{{\mathds R}^n} \int_{{\mathds R}^n} K(x,y)|v(x)-v(y)|^p \dxy, \quad 
\end{equation}
over the class of functions $ \{v \in W^{s,p}({\mathds R}^n) \, : \, v = g \, \mbox{ a.e. in }  {\mathds R}^n \setminus \Omega \}$.
Here $K$ is a suitable symmetric kernel of order $(s,p)$ with just measurable coefficients, see~\eqref{hp_K}. It is standard to show, which is in fact our Theorem~\ref{lem_sss} below, that minimizers can be equivalently characterized by the weak solutions to the following class of integro-differential problems
\begin{equation}\label{p1}
\begin{cases}
\mathcal{L} u=0 & \text{in }\Omega,\\
u=g & \text{in }\mathds{R}^n\setminus\Omega,
\end{cases}
\end{equation}
where the operator $\mathcal{L}$ is defined formally by
\begin{equation}\label{taneli}
\mathcal{L}u(x)=P.~\!V.\int_{\mathds{R}^n} K(x,y)|u(x)-u(y)|^{p-2}(u(x)-u(y))\,{\rm d}y, \quad x\in \mathds{R}^n;
\end{equation}
the symbol $P.~\!V.$ means ``in the principal value sense''.
We immediately refer to Section~\ref{sec_pre} for the precise assumptions on the involved quantities.

To simplify, one can keep in mind the model case when the kernel $K(x,y)$  coincides with $|x-y|^{-(n+sp)}$, though in such a case the difficulties arising from having merely measurable coefficients disappear; that is, the function $u$ reduces to the solution of the following problem
\begin{equation}\label{def_p1}
\begin{cases}
{\displaystyle}
(-\Delta)^{s}_p \,u = 0 & \text{in} \ \Omega, \\[1ex]
u = g & \text{in} \ {\mathds R}^n\setminus \Omega,
\end{cases}
\end{equation}
where the symbol~${\displaystyle} (-\Delta)^{s}_p$ denotes the standard fractional $p$-Laplacian operator.

Recently, a great attention has been focused on the study of problems involving fractional Sobolev spaces and corresponding nonlocal equations, both from a pure mathematical point of view and for concrete applications, since they naturally arise in many different contexts. For an elementary introduction to this topic and 
for a quite extensive list of related references we refer to~\cite{DPV12}.

However, for what concerns regularity and related results for this kind of operators when $p\neq 2$,  
the theory seems to be rather incomplete. Nonetheless, some partial results are known. Firstly, we would like to cite the higher regularity contributions for viscosity solutions in the case when {\it $s$ is close to $1$} proven in the recent interesting paper~\cite{BCF12}; see, also,~\cite{IN10}. Secondly, the analysis in the papers~\cite{CLM12} and \cite{LL13} considers the special case when {\it $p$ is suitably large} - thus falling in the  
 Morrey embedding case when concerning regularity. See also~\cite{FP13} for some basic results for fractional $p$-eigenvalues.

On the contrary, when $p=2$ and $K(x,y) = |x-y|^{-n-2s}$, that is the case of the well-known fractional Laplacian operator~$(-\Delta)^s$, the situation simplifies notably. Although having been a classical topic in  Functional and Harmonic Analysis as well as in Partial Differential Equations for a long time,
in the last years the growing interest for such operator has become really significant and many important results for the minimizer of~\eqref{def_f1} have been achieved.
 For what concerns the main topic in the present paper, i.e., the local behavior of the fractional minimizers, it is worth mentioning the very relevant contributions for the case $p=2$ by Kassmann (\cite{Kas09,Kas12}); see also \cite{Sil06,SV13}. In particular, among other results,  Kassmann proves H\"older regularity and a Harnack inequality ``revisited'' in the right form taking into account the nonlocality of the fractional Laplacian operator; we refer also to~\cite{Kas07} to discover how the classic Harnack inequality fails in the fractional framework.
 
In the present paper, we will deal with a larger class of operators with a symmetric kernel $K$ having only measurable coefficients, and, above all, satisfying fractional differentiability for {\it any} $s\in (0,1)$ and $p$-summability for  {\it any} $p>1$. For this, we will have to handle not only the usual nonlocal character of such fractional operators, but also the difficulties given by the corresponding nonlinear behavior. 
As a consequence, we can make use neither of the powerful framework provided by the Caffarelli-Silvestre $s$-harmonic extension (\cite{CS07}) nor of various  tools as, e.~\!g., 
the sharp 3-commutators estimates introduced in~\cite{DLR11} to deduce the regularity of weak fractional harmonic maps,
the strong barriers and density estimates in~\cite{PSV13,SV13a,SV13b}, 
the commutator and energy estimates in~\cite{PP13,PS13}, and so on. Indeed, the aforementioned tools seem not to be trivially adaptable to a nonlinear framework; also, increasing difficulties are due to the non-Hilbertian structure of the involved fractional Sobolev spaces~$W^{s,p}$ when $p$ is different than 2.

We will have to work carefully in order to obtain the needed local estimates. 
For this, we want to underline that a specific quantity will be fundamental throughout the whole paper. Namely, we introduce the {\it nonlocal tail} of a function $v \in W^{s,p}({\mathds R}^n)$ in the ball $B_{R}(x_0)\subset {\mathds R}^n$ given by
\begin{equation}\label{def_t1}
{\rm Tail}(v; x_0, R):=\left[R^{sp}\int_{\mathds{R}^n\setminus B_R(x_0)}|v(x)|^{p-1}|x-x_0|^{-(n+sp)}\,{\rm d}x\right]^{\frac{1}{p-1} }.
\end{equation} 
Note that the above number is finite by H\"older's inequality whenever $v \in L^{q}({\mathds R}^n)$, $q\geq p-1$, and $R>0$.
As expected, the way how the nonlocal tail will be  managed is a key-point in the present extended local theory. We believe that this is a general fact that will have to be taken into account in other results and extensions in the nonlinear fractional framework. 

We are now ready to introduce our main results. The first one describes the local boundedness. 
\begin{thm}[\bf Local boundedness]\label{sup}
Let $p \in (1,\infty)$, let $u\in W^{s,p}(\mathds{R}^n)$ be a weak subsolution to problem~\eqref{p1} and let $B_r\equiv B_r(x_0)  \subset \Omega$. Then the following estimate holds true
\begin{eqnarray}\label{sup_estimate}
\sup_{B_{r/2}(x_0)}u \, \leq \, \delta\, {\rm Tail}(u_+;x_0,r/2)+c \delta^{-\frac{(p-1)n}{s p^2}} \left(\dashint_{B_r(x_0)} u_+^p\,{\rm d}x\right)^{\frac 1p},
\end{eqnarray}
where ${\rm Tail}(u_+;x_0,r/2)$ is defined in~\eqref{def_t1}, $u_+=\max\{u,0\}$ is the positive part of the function $u$, $\delta \in (0,1]$, and the constant $c$ depends only on $n,p,s$ and on the structural constants $\lambda,\Lambda$ defined in~\eqref{hp_K}.
\end{thm}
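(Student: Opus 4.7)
The strategy is a De Giorgi iteration on level sets, adapted to the nonlocal setting. Fix a target level $k>0$ to be chosen at the end, set $k_j:=k(1-2^{-j})$, $r_j:=r/2+2^{-j-1}r$, concentric balls $B_j:=B_{r_j}(x_0)$, and truncations $w_j:=(u-k_j)_+$. The first ingredient is a nonlocal Caccioppoli inequality for weak subsolutions: testing the equation against $\eta_j^p\,w_j$ for cut-offs $\eta_j\in C_c^\infty(B_{j-1})$ with $\eta_j\equiv 1$ on $B_j$ and $|\nabla\eta_j|\leq c\,2^j/r$ should produce an estimate of the shape
\begin{equation*}
\int_{B_{j-1}}\int_{B_{j-1}}\frac{|\eta_j(x)w_j(x)-\eta_j(y)w_j(y)|^p}{|x-y|^{n+sp}}\dxy\leq \frac{C\,2^{jsp}}{r^{sp}}\int_{B_{j-1}}w_j^p\,{\rm d}x+C\,\mathcal{T}_j\int_{B_{j-1}}w_j\,{\rm d}x,
\end{equation*}
with truncated tail $\mathcal{T}_j:=\sup_{x\in\mathrm{supp}\,\eta_j}\int_{\mathds{R}^n\setminus B_j}w_j(y)^{p-1}|x-y|^{-n-sp}\,{\rm d}y$. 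Such tail-corrected Caccioppoli estimates are fairly standard for operators of the form~\eqref{taneli}, and I would derive one as a preliminary step.

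Combining this with the fractional Sobolev embedding $W^{s,p}\hookrightarrow L^{p^\ast_s}$ (or a suitable $L^\kappa$, $\kappa>p$, when $sp\geq n$) and with Chebyshev's inequality $|\{u>k_j\}\cap B_{j-1}|\leq (k_j-k_{j-1})^{-p}\int w_{j-1}^p\,{\rm d}x$ yields, for $A_j:=\dashint_{B_j}w_j^p\,{\rm d}x$, a recursion of the form
\begin{equation*}
A_{j+1}\leq \frac{C\,b^{j}}{k^{p\alpha}}\,A_j^{1+\alpha}+(\text{controllable tail remainders})
\end{equation*}
for some explicit $\alpha>0$ and geometric base $b>1$. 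To tame $\mathcal{T}_j$ I would split the integration as $(\mathds{R}^n\setminus B_{r/2})\cup(B_{r/2}\setminus B_j)$: since $w_j\leq u_+$ and $|x-y|\approx|x_0-y|$ for $x\in\mathrm{supp}\,\eta_j\subset B_r$ and $y\notin B_{r/2}$, the first piece is controlled by a constant multiple of $r^{-sp}\,{\rm Tail}(u_+;x_0,r/2)^{p-1}$; the second piece, thanks to $|x-y|\gtrsim r\,2^{-j}$, is purely local and bounded by a multiple of $2^{j(n+sp)}r^{-n-sp}\int_{B_r}w_{j-1}^{p-1}\,{\rm d}y$, which merges into the $A_j$-recursion at the cost of a larger $b$.

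Choosing finally
\begin{equation*}
k:=\delta\,{\rm Tail}(u_+;x_0,r/2)+c\,\delta^{-\frac{(p-1)n}{sp^2}}\left(\dashint_{B_r}u_+^p\,{\rm d}x\right)^{1/p},
\end{equation*}
the tail portion of $\mathcal{T}_j$ becomes a $\delta$-controlled fraction of $k^p/r^{sp}$ and gets absorbed, while the second summand of $k$ is precisely what the standard fast-geometric-decay lemma requires in order to ensure $A_0$ is small enough to trigger $A_j\to 0$; this yields $u\leq k$ a.e.\ in $B_{r/2}$, which is~\eqref{sup_estimate}. The main obstacle I foresee is the bookkeeping that ties the free parameter $\delta$ to the exponent $(p-1)n/(sp^2)$: one must balance the share of $k$ spent on the tail against the share spent on making the initial $L^p$-datum small enough to ignite the iteration, and that exponent arises from the combined effect of $p^\ast_s$, Chebyshev's inequality, and the geometric spacing $2^{-j}k$ of the levels. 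Keeping the tail $\mathcal{T}_j$ uniformly absorbed throughout --- in particular ensuring that the nonlocal contribution from $B_{r/2}\setminus B_j$ does not destroy the geometric decay --- is the technical heart of the argument.
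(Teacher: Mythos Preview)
Your proposal is correct and follows the same De Giorgi iteration as the paper: a Caccioppoli estimate with tail (this is the paper's Theorem~\ref{cacio}), the fractional Sobolev embedding, the level-set recursion, and the choice of $k$ to absorb the tail and trigger the geometric decay all match. One small simplification: with your radii $r_j=r/2+2^{-j-1}r\geq r/2$ you have $B_{r/2}\subset B_j$ for every $j$, so the annulus $B_{r/2}\setminus B_j$ is empty and the tail $\mathcal T_j$ already lives in $\mathds{R}^n\setminus B_{r/2}$, giving directly $\mathcal T_j\leq c\,2^{j(n+sp)}r^{-sp}[{\rm Tail}(u_+;x_0,r/2)]^{p-1}$ with no local remainder to merge.
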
 
The parameter $\delta$ allows interpolation between the local and nonlocal terms. Armed with the Logarithmic Lemma and the Caccioppoli estimate with tail introduced below, together with the deduced local boundedness, we can prove our main result, that is, the H\"older continuity theorem.
\begin{thm}[\bf H\"older continuity]\label{holder1}
Let $p\in(1,\infty)$ and let $u\in W^{s,p}(\mathds{R}^n )$ be a solution to  problem~\eqref{p1}. Then $u$ is locally H\"older continuous in $\Omega$. In particular, 
there are positive constants  $\alpha$, $\alpha < sp/(p-1)$, and $c$, both depending only on $n,p,s,\lambda,\Lambda$, such that if $B_{2r}(x_0) \subset \Omega$, then
\begin{equation*}
\osc_{B_\varrho(x_0)} u \leq c \left(\frac{\varrho}{r}\right)^\alpha \left[ {\rm Tail}(u;x_0,r)+ \left(\dashint_{B_{2r}(x_0)} |u|^p\,{\rm d}x\right)^{\frac 1p} \right]
\end{equation*}
holds whenever $\varrho \in (0,r]$.
\end{thm}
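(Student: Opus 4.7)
The plan is to establish Hölder continuity by the classical De Giorgi oscillation-reduction scheme adapted to the nonlocal nonlinear setting, keeping careful track of the nonlocal tail at every scale.

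First, I fix $x_0 \in \Omega$ with $B_{2r}(x_0) \subset \Omega$ and, after a translation, assume $x_0 = 0$. Choose $\sigma \in (0,1/4)$ (to be fixed later in terms of $n,p,s,\lambda,\Lambda$) and set $r_j = \sigma^j r$, $B_j = B_{r_j}$. I will build, by induction on $j$, two monotone sequences $M_j \geq \sup_{B_j} u$ and $m_j \leq \inf_{B_j} u$ such that $M_j - m_j \leq \omega_j$, where
\[
\omega_j := K\,\sigma^{j\alpha}, \qquad K := \mathrm{Tail}(u;0,r)+\Bigl(\dashint_{B_{2r}}|u|^p\,{\rm d}x\Bigr)^{1/p},
\]
with $\alpha \in (0, sp/(p-1))$ and the multiplicative constant for $K$ hidden by enlarging $K$ by a fixed factor. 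This is the standard reformulation of Hölder continuity as geometric decay of the oscillation.

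The inductive step is the heart of the argument. Given the sequences up to index $j$, one of the two alternatives
\[
\Bigl|\bigl\{x \in B_{j+1}:\; u(x) \geq (M_j+m_j)/2\bigr\}\bigr| \geq \tfrac12|B_{j+1}|
\]
or the complementary inequality must hold; by replacing $u$ with $-u$ I may assume the first. Setting $w := u - m_j + \epsilon \omega_j$ for a small $\epsilon > 0$, the function $w$ is a nonnegative weak supersolution on $B_j$ whose superlevel set at height $\omega_j/2$ covers at least half of $B_{j+1}$. Apply the Logarithmic Lemma to $w$ on a suitable intermediate ball: this yields a uniform bound on $\log w$ which, combined with the measure information above, forces $w$ to be bounded below by a definite fraction of $\omega_j$ on a set of positive measure inside an even smaller concentric ball. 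Then run the De Giorgi iteration based on the Caccioppoli estimate with tail (applied to truncations $(w-k)_-$) to upgrade this measure-theoretic lower bound into a pointwise lower bound $w \geq \eta \omega_j$ on $B_{j+1}$, for some $\eta = \eta(n,p,s,\lambda,\Lambda) \in (0,1)$. This gives $\inf_{B_{j+1}} u \geq m_j + (\eta - \epsilon)\omega_j$, hence $\osc_{B_{j+1}} u \leq (1-\eta+\epsilon)\omega_j$, which is the required contraction provided $\sigma$ is chosen so that $\sigma^\alpha \geq 1 - \eta + \epsilon$.

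The main obstacle, and where the tail enters decisively, is the control of the nonlocal contributions that appear both in the Caccioppoli estimate and in the Logarithmic Lemma when applied at scale $r_{j+1}$. The tail $\mathrm{Tail}(w;0,r_{j+1})^{p-1}$ picks up the quantity
\[
\sum_{i=0}^{j} \sigma^{(j+1)sp} \sigma^{-i(sp+n)}\,\bigl(\omega_i\bigr)^{p-1}|B_i|
\]
from the nested annuli $B_{i}\setminus B_{i+1}$, plus the contribution from outside $B_0$ which is bounded by $K^{p-1}$. Using $\omega_i^{p-1} = K^{p-1}\sigma^{i\alpha(p-1)}$ one sees that the geometric series converges, and the total tail is bounded by $C K \sigma^{(j+1)\alpha}$, provided $\alpha(p-1) < sp$; this is precisely the restriction $\alpha < sp/(p-1)$. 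Thus the tail contribution at scale $j+1$ can be absorbed into $\epsilon \omega_{j+1}$, closing the induction. After choosing $\sigma$, $\epsilon$, and finally $\alpha$ in that order so that all the constraints above hold simultaneously, I obtain $\osc_{B_{\sigma^j r}} u \leq CK\sigma^{j\alpha}$ for all $j\geq 0$; a standard comparison between $\varrho \in (0,r]$ and the nearest $r_j$ converts this into the stated estimate.
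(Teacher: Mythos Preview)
Your proposal is correct and follows essentially the same route as the paper: an inductive oscillation-decay scheme on balls $B_{r_j}$ with $r_j=\sigma^j r$, using the density alternative, the Logarithmic Lemma (Corollary~\ref{poincare+cacio}) to pass from measure information to a quantitative smallness of the bad set, and then a De~Giorgi iteration based on the Caccioppoli estimate with tail to obtain the pointwise lower bound; the tail at each step is controlled by summing over the annuli $B_{i-1}\setminus B_i$ and invoking the induction hypothesis, which produces the geometric series converging exactly under the condition $\alpha(p-1)<sp$. The only cosmetic differences are that the paper shifts by $\inf_{B_j}u$ and carries the regularizing parameter $d=\varepsilon\,\omega(r_j)$ inside the logarithm rather than in the definition of $w$, and that the paper's base case $\osc_{B_0}u\le\omega(r_0)$ is obtained by an explicit appeal to the local boundedness Theorem~\ref{sup}; you should make that initial step explicit, and note that your displayed annulus sum carries an extraneous factor $r^n$ (the volume $|B_i|$ cancels against $|x|^{-n}$), though your conclusion about the convergence condition is unaffected.
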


The theorem above provides an extension of classical analogous results by De Giorgi-Nash-Moser~(\cite{Deg57,Nas58,Mos61})  to the nonlocal, nonlinear framework. It also extends  the recent aforementioned result by Kassmann (\cite{Kas09}) to the case $p\neq 2$.
Moreover, it is worth noticing that in the linear case studied in~\cite{Kas09} a further boundedness assumption is required, which is now for free thanks to Theorem~\ref{sup}.

In the proof of the H\"older continuity the following {\it logarithmic estimate} plays the key role. We state it in the introduction as we think that it might be extremely 
 useful also in other contexts.  
\begin{lemma}[\bf Logarithmic Lemma]\label{log_lemma}
Let $p \in (1,\infty)$. Let $u\in W^{s,p}(\mathds{R}^n)$ be a weak supersolution to  problem~\eqref{p1} such that $u\geq 0$ in $B_R \equiv B_R(x_0) \subset \Omega$. Then the following estimate holds for any $B_{r}\equiv B_{r}(x_0) \subset B_{R/2}(x_0)$ and any $d>0$, 
\begin{eqnarray}\label{log_estimate}
&& \int_{B_{r}} \int_{B_{r}} K(x,y) \left|\log\left(\frac{d + u(x)}{d + u(y)}\right)\right|^p \,{\rm d}x {\rm d}y \nonumber \\[1ex]
&& \qquad \qquad \qquad \quad \quad \leq c\,r^{n-sp}\left\{  d^{1-p}\,\left(\frac{r}{R}\right)^{sp}\,\left[{\rm Tail}(u_-;x_0,R)\right]^{p-1}+1\right\},
\end{eqnarray}
where ${\rm Tail}(u_-;x_0,R)$ is defined in~\eqref{def_t1}, $u_{-}=\max\{-u,0\}$ is the negative part of the function $u$, and $c$ depends only on $n$, $p$, $s$, $\lambda$ and $\Lambda$. 
\end{lemma}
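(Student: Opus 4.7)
The plan is to mimic Moser's classical logarithmic estimate through a well-chosen test function, reducing the nonlocal computation to a pointwise algebraic inequality. Pick a Lipschitz cutoff $\eta\in C^{0,1}_c(B_{3r/2})$ with $\eta\equiv 1$ on $B_r$, $0\leq\eta\leq 1$ and $|\nabla\eta|\leq c/r$; since $B_r\subset B_{R/2}$, the support of $\eta$ lies in $B_{3R/4}\subset B_R$, where $u\geq 0$. Plug $\varphi(x)=\eta(x)^p(d+u(x))^{1-p}$ into the weak supersolution formulation: $\varphi\geq 0$ is supported in $B_{3r/2}$ and $d+u\geq d>0$ there, so it is an admissible test function. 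Split $\mathbb{R}^n\times\mathbb{R}^n$ by whether each variable lies in $B_R$: the $(\mathbb{R}^n\setminus B_R)^2$ piece vanishes since $\varphi$ does, so the supersolution inequality reads $0\leq\mathcal{I}_1+2\mathcal{I}_2$ with $\mathcal{I}_1$ over $B_R\times B_R$ and $\mathcal{I}_2$ over $B_R\times(\mathbb{R}^n\setminus B_R)$.

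The core of the argument is the pointwise algebraic inequality, valid for $a,b\geq 0$, $d>0$, $\eta_1,\eta_2\in[0,1]$:
\[
|a-b|^{p-2}(a-b)\bigl[\eta_1^p(d+a)^{1-p}-\eta_2^p(d+b)^{1-p}\bigr]\leq -c_1\min(\eta_1,\eta_2)^p\Bigl|\log\tfrac{d+a}{d+b}\Bigr|^{p}+c_2\,|\eta_1-\eta_2|^{p}.
\]
Assuming $a\geq b$ and setting $t=(d+a)/(d+b)\geq 1$, the left-hand side decomposes as $-\eta_2^p A(t)+B(t)(\eta_1^p-\eta_2^p)$ with $A=(t-1)^{p-1}(1-t^{1-p})$ and $B=((t-1)/t)^{p-1}$. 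Asymptotic checks at $t=1$ and $t\to\infty$ yield $A\geq c_p\log^p t$, while $1-1/t\leq\log t$ for $t\geq 1$ gives $B^{p/(p-1)}\leq\log^p t$. A Young inequality combined with the bound $(1+\mu)^p-1\leq c_p(\mu+\mu^p)$ applied to $\mu=(\eta_1-\eta_2)/\eta_2\geq 0$ controls the cross term by $c_p C_\varepsilon\,\eta_2^p\log^p t+c_p(\varepsilon+1)(\eta_1-\eta_2)^p$; taking $\varepsilon$ large so that $c_p C_\varepsilon$ is smaller than half the diagonal constant $c_p$ leaves a strictly negative residual coefficient on $\min(\eta_1,\eta_2)^p\log^p t$. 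The case $\eta_1<\eta_2$ follows by the mirror decomposition in which the cross term carries the favorable sign. Integrating against $K(x,y)$ over $B_R\times B_R$, the factor $\min(\eta(x),\eta(y))^p$ equals $1$ on $B_r\times B_r$, yielding the sought log integral on the left, while the $c_2|\eta(x)-\eta(y)|^p$ error is bounded by the standard Gagliardo-type estimate $\Lambda[\eta]_{W^{s,p}(\mathbb{R}^n)}^p\leq c\,r^{n-sp}$ for a cutoff at scale $r$, producing the $+1$ term in \eqref{log_estimate}.

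For $\mathcal{I}_2$, the identity $\varphi(y)=0$ leaves $\varphi(x)$ only, and the positive part of $|u(x)-u(y)|^{p-2}(u(x)-u(y))$ is bounded by $c_p\bigl((d+u(x))^{p-1}+u(y)_-^{p-1}\bigr)$; multiplying by $\varphi(x)$ and using $(d+u(x))^{1-p}\leq d^{1-p}$ gives the pointwise bound $c_p\,\eta(x)^p\bigl(1+d^{1-p}u(y)_-^{p-1}\bigr)$. For $x\in\mathrm{supp}\,\eta\subset B_{3R/4}$ and $y\notin B_R$ one has $|x-y|\geq |y-x_0|/4$, so $K(x,y)\leq c\,|y-x_0|^{-n-sp}$, and the $y$-integrals equal $c R^{-sp}$ and $c R^{-sp}[\mathrm{Tail}(u_-;x_0,R)]^{p-1}$ respectively; multiplying by $\int\eta^p\leq c r^n$ and using the identity $r^n R^{-sp}=r^{n-sp}(r/R)^{sp}$ reproduces exactly the two summands on the right-hand side of \eqref{log_estimate}. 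The main obstacle is the algebraic inequality above, whose proof requires the delicate Young balance just described; everything else is bookkeeping together with standard nonlocal cutoff bounds.
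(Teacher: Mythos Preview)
Your proposal is correct and follows essentially the same approach as the paper: the identical test function $\eta^p(d+u)^{1-p}$, the same local/nonlocal split, the same tail treatment of the off-diagonal piece, and the same reduction of the local piece to a pointwise algebraic inequality of the form $-c\,\min(\eta_1,\eta_2)^p|\log t|^p+c\,|\eta_1-\eta_2|^p$. The only difference is tactical: the paper derives that pointwise inequality via Lemma~\ref{lemma:trivial conv} with a data-dependent $\varepsilon$ followed by a two-regime analysis of $g(t)=(1-t^{1-p})/(1-t)$, whereas you obtain it more directly from the uniform bounds $A(t)\geq c_p\log^p t$ and $B(t)^{p/(p-1)}\leq\log^p t$ combined with Young's inequality.
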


Then, we will show that the fractional $p$-minimizers, equivalently the weak solutions to the  Euler-Lagrange equation associated to~\eqref{def_f1}, satisfy the following nonlocal Caccioppoli-type inequalities.
\begin{thm}[{\bf Caccioppoli estimates with tail}]\label{cacio} Let $p \in (1,\infty)$ and
let $u\in W^{s,p}(\mathds{R}^n)$ be a weak solution to problem \eqref{p1}.
 Then, for any $B_r\equiv B_r(x_0)\subset \Omega$ and any nonnegative $\phi\in C^\infty_0(B_r)$,  the following estimate holds true
\begin{eqnarray}\label{cacio1}
\nonumber && \int_{B_r}\int_{B_r}K(x,y) |w_{\pm}(x)\phi(x)-w_{\pm}(y)\phi(y)|^p \,{\rm d}x{\rm d}y\\
 &&\qquad \qquad  \leq c\int_{B_r}\int_{B_r} K(x,y)
 (\max\{w_{\pm}(x),w_\pm(y)\})^p |\phi(x)-\phi(y)|^p\,{\rm d}x{\rm d}y\\
&&\qquad \qquad \quad+\,c \,\int_{B_r}w_{\pm}(x)\phi^p(x)\,{\rm d}x \left(\sup_{y\,\in\, {\rm supp}\,\phi}\int_{\mathds{R}^n\setminus B_r} K(x,y)w_{\pm}^{p-1}(x)\,{\rm d}x \right)\!, 
\nonumber
\end{eqnarray}
where $w_{\pm}:=(u-k)_{\pm}$ and $c$ depends only on~$p$.
\end{thm}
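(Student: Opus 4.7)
The approach is to test the weak formulation of $\mathcal L u=0$ with $\eta=w_+\phi^p$ (and, for the minus case, with $\eta=-w_-\phi^p$), which is an admissible test function since $\phi\in C_0^\infty(B_r)$ forces $\eta\in W^{s,p}(\mathds R^n)$ to be compactly supported in $B_r$. Decomposing $\mathds R^n\times\mathds R^n$ into $B_r\times B_r$, the two cross pieces $B_r\times(\mathds R^n\setminus B_r)$ and $(\mathds R^n\setminus B_r)\times B_r$, and $(\mathds R^n\setminus B_r)^2$ (where $\eta\equiv 0$ so the contribution vanishes), and using the symmetry of $K$, the weak identity collapses to
\begin{align*}
&\int_{B_r}\!\int_{B_r}K(x,y)|u(x)-u(y)|^{p-2}(u(x)-u(y))\bigl(\eta(x)-\eta(y)\bigr)\dxy\\
&\qquad=-2\int_{B_r}\!\int_{\mathds R^n\setminus B_r}K(x,y)|u(x)-u(y)|^{p-2}(u(x)-u(y))\,w_+(x)\phi^p(x)\dxy.
\end{align*}
The goal is then to bound the left-hand side from below by the LHS of \eqref{cacio1} minus the first error term, and the right-hand side from above by the second (tail) term.

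\textbf{Local term.} The crucial ingredient is the pointwise inequality
\begin{equation*}
|a-b|^{p-2}(a-b)\bigl((a-k)_+\phi_1^p-(b-k)_+\phi_2^p\bigr)\geq \tfrac{1}{C}\bigl|(a-k)_+\phi_1-(b-k)_+\phi_2\bigr|^p-C\bigl(\max\{(a-k)_+,(b-k)_+\}\bigr)^p|\phi_1-\phi_2|^p,
\end{equation*}
valid for all $a,b\in\mathds R$ and $\phi_1,\phi_2\geq 0$ with $C=C(p)>0$. I would verify it by case analysis on the positions of $a,b$ relative to $k$: the case $a,b\leq k$ is trivial; when exactly one exceeds $k$ (say $a>k\geq b$), the bound $a-b\geq a-k\geq 0$ together with the monotonicity of $t\mapsto t^{p-1}$ on $\mathds R_+$ gives the inequality with vanishing error; when both $a,b>k$, the problem reduces to the classical $p$-Laplace algebraic lemma for nonnegative arguments, itself obtained from the splitting $A\phi_1^p-B\phi_2^p=\tfrac12(\phi_1^p+\phi_2^p)(A-B)+\tfrac12(A+B)(\phi_1^p-\phi_2^p)$, the elementary bound $|\phi_1^p-\phi_2^p|\leq p(\max\{\phi_1,\phi_2\})^{p-1}|\phi_1-\phi_2|$, and Young's inequality with exponents $p$ and $p/(p-1)$. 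Multiplying by $K(x,y)$ and integrating over $B_r\times B_r$ then yields the required lower bound.

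\textbf{Cross term and assembly.} For $x\in B_r$ and $y\notin B_r$, the identity $|t|^{p-2}t=t_+^{p-1}-(-t)_+^{p-1}$ yields
\begin{equation*}
-|u(x)-u(y)|^{p-2}(u(x)-u(y))w_+(x)\phi^p(x)\leq(u(y)-u(x))_+^{p-1}\,w_+(x)\phi^p(x).
\end{equation*}
On the set where $w_+(x)>0$ and $u(y)>u(x)$ (elsewhere the integrand is nonpositive and may be dropped) one has $u(y)>u(x)>k$, hence $(u(y)-u(x))_+^{p-1}\leq(u(y)-k)_+^{p-1}=w_+^{p-1}(y)$. Thus the right-hand side of the collapsed weak identity is controlled by
\begin{equation*}
2\int_{B_r}w_+(x)\phi^p(x)\int_{\mathds R^n\setminus B_r}K(x,y)w_+^{p-1}(y)\dxy.
\end{equation*}
Relabeling $(x,y)\leftrightarrow(y,x)$ by the symmetry of $K$, and using that $\phi^p$ vanishes outside $\mathrm{supp}\,\phi$, the inner integral may be replaced by the supremum over $y\in\mathrm{supp}\,\phi$, producing exactly the second term on the right of \eqref{cacio1}. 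The minus case is handled by testing with $-w_-\phi^p$, the sign flip being compensated by the identity $u(x)-u(y)=-(w_-(x)-w_-(y))$ when $u(x),u(y)\leq k$, so that every step mirrors the plus case. The main technical obstacle is the pointwise algebraic inequality in the local term, which requires careful case analysis to deliver positive constants uniform in $k$ and depending only on $p$; everything else is essentially bookkeeping.
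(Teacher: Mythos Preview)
Your proposal is correct and follows essentially the same route as the paper: test the weak formulation with $\eta=w_+\phi^p$, split into the local piece on $B_r\times B_r$ and the cross piece, bound the cross piece by $w_+^{p-1}(y)w_+(x)$ exactly as you do, and prove a pointwise algebraic lower bound for the local integrand via the same case analysis on the positions of $u(x),u(y)$ relative to $k$. The only technical difference is in the case $a,b>k$: the paper proves the lower bound by applying a convexity inequality $|a|^p\le(1+c_p\varepsilon)|b|^p+(1+c_p\varepsilon)\varepsilon^{1-p}|a-b|^p$ with the specific data-dependent choice $\varepsilon=\text{const}\cdot(w_+(x)-w_+(y))/w_+(x)$, whereas you use the symmetric splitting $A\phi_1^p-B\phi_2^p=\tfrac12(\phi_1^p+\phi_2^p)(A-B)+\tfrac12(A+B)(\phi_1^p-\phi_2^p)$ followed by Young's inequality; both arguments yield the same estimate with constants depending only on $p$.
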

\begin{rem}\label{rem_sub} The estimate in~\eqref{cacio1} continues to hold for $w_+$  when $u$ is merely a weak subsolution to~\eqref{pb1} and for $w_-$ when $u$ is a weak supersolution to~\eqref{pb1}. 
\end{rem}
Notice that, as expected, in the nonlocal framework one has to take into account a suitable tail; see, in particular, the estimate in~\eqref{taaa} below to see how the second term in the right hand-side of~\eqref{cacio1} is controlled by a tail as given in definition~\eqref{def_t1}.
Also, it is worth mentioning that   other fractional Caccioppoli-type inequalities 
have been recently used in different contexts (see, for instance,~\cite{Min07,Min11,FP13}), 
 although none of them takes into account the tails.\footnote{
 We recently discovered that Kassmann proved similar Caccioppoli estimates with tail terms in the linear case, when $p=2$; see~\cite{Kas07b}.
 }

\vspace{2mm}

Let us finally comment some recent results in the literature. In~\cite{DKP14} we prove Harnack-type inequalities with tail for weak supersolutions and solutions to~\eqref{p1}. These can be applied to obtain H\"older continuity of the solutions. However, the proof in~\cite{DKP14} are heavily based on the tools developed in the present paper.
Moreover, the regularity theory for the inhomogeneous counterpart $\mathcal{L}u=f$ have been settled in~\cite{KMS15} in a general setting, including also the case when the source term~$f$ is merely a measure. In turn, these results are partly based on the quantitative estimates established here.  The principal value definition~\eqref{taneli} has been used in \cite{Lin14} to obtain regularity results in the context of viscosity solutions. Also, for general existence results and other regularity issues, we refer  to the very recent contributions in~\cite{ILPS14}, and in~\cite{BP15}, where the related fractional $p$-eigenvalue problem has been considered.

\vspace{2mm}
The paper is organized as follows. In Section~\ref{sec_pre} below, we fix the notation by also providing some preliminary results. Section~\ref{sec_log} is devoted to the proof of the Log~Lemma \ref{log_lemma} and the Caccioppoli estimates with tail in Theorem~\ref{cacio}. In Section~\ref{sec_sup}, we establish the local boundedness given by Theorem~\ref{sup}.
In Section~\ref{sec_holder}, we shall finally prove the H\"older continuity given by Theorem~\ref{holder1}.

\vspace{3mm}
\section{Preliminaries}\label{sec_pre}

In this section, we state the general assumptions of the problem  we deal with in the present paper, we fix notation, and we provide some definitions and some basic preliminary results that we will use in the following pages.

The kernel $K:\mathds{R}^n\times \mathds{R}^n \to [0,\infty)$ is a symmetric measurable function such that
\begin{equation}\label{hp_K}
\lambda\leq K(x,y)|x-y|^{n+sp}\leq \Lambda \,\,\,\text{for almost every} \ x,\,y\in \mathds{R}^n,
\end{equation}
for some $s\in(0,1)$, $p>1$, $\Lambda\geq\lambda\geq 1$. Notice that such assumption on $K$ can be weakened as follows
\begin{equation*} 
\lambda \leq K(x,y)|x-y|^{n+sp} \leq \Lambda \ \text{for almost every} \ x, y \in {\mathds R}^n \ \text{s.~\!t.} \ |x-y| \leq 1,
\end{equation*}
\begin{equation*} 
0\leq K(x,y)|x-y|^{n+\eta} \leq M \ \text{for almost every} \ x, y \in {\mathds R}^n \ \text{s.~\!t.} \ |x-y| > 1,
\end{equation*}
for some $s, \lambda, \Lambda$ as above, $\eta>0$ and $M\geq 1$; see, e.~\!g., \cite{Kas09,Kas12}. For the sake of simplicity, we will keep the assumption in~\eqref{hp_K}, since such a choice will imply no relevant differences in all the proofs in the rest of the paper.

For any $p\in [1,\infty)$ and $s\in(0,1)$ we denote by $W^{s,p}({\mathds R}^n)$ the fractional Sobolev space, that is
$$
W^{s,p}({\mathds R}^n):= \left\{v\in L^p({\mathds R}^n) : \frac{|v(x)-v(y)|}{|x-y|^{\frac np+s}}\in L^p({\mathds R}^n\times {\mathds R}^n)\right\};
$$
i.~\!e., an intermediary Banach space between $L^p({\mathds R}^n)$ and $W^{1,p}({\mathds R}^n)$, endowed with the natural norm
$$
\|v\|_{W^{s,p}({\mathds R}^n)}:= \left(\int_{{\mathds R}^n}|v|^p\,{\rm d}x\right)^{\frac 1p}+\left(\int_{{\mathds R}^n}\int_{{\mathds R}^n} \frac{|v(x)-v(y)|^p}{|x-y|^{n+sp}}\,{\rm d}x{\rm d}y\right)^{\frac 1p}.
$$
In a similar way, it is possible to define the fractional Sobolev space $W^{s,p}({\Omega})$ in a domain $\Omega\subseteq\mathds{R}^n$. 
Furthermore, by saying that $v$ belongs to $W_0^{s,p}({\Omega})$ we mean that $v \in W^{s,p}({\mathds R}^n)$ and $v=0$ almost everywhere in ${\mathds R}^n \setminus \Omega$. 

As mentioned in the introduction, we  define the {\it nonlocal tail of a function $v$ in the ball $B_{R}(x_0)$}, a quantity which will play an important role in the rest of the paper. For any $v\in W^{s,p}(\mathds{R}^n)$ and $B_R(x_0)\subset \mathds{R}^n$, we write
\begin{equation}\label{Tail}
{\rm Tail}(v; x_0, R):=\left[R^{sp}\int_{\mathds{R}^n\setminus B_R(x_0)}|v(x)|^{p-1}|x-x_0|^{-n-sp}\,{\rm d}x\right]^{\frac{1}{p-1} },
\end{equation} 
which is a finite number by H\"older's inequality since $v \in L^{p}({\mathds R}^n)$ and $R>0$.
 

Let $\Omega$ be a bounded open set in $\mathds{R}^n$ and $g\in W^{s,p}(\mathds{R}^n)$, we will be interested in weak solutions to the following integro-differential problems  
\begin{equation}\label{pb1}
\begin{cases}
\mathcal{L} u=0 & \text{in }\Omega,\\
u=g & \text{in }\mathds{R}^n\setminus\Omega,
\end{cases}
\end{equation}
where the operator~$\mathcal{L}$ is formally defined in~\eqref{taneli}.
Notice that the boundary condition is given in the whole complement of $\Omega$, as usual when dealing with such nonlocal operators. A model example we have in mind is 
the fractional $p$-Laplacian, that is
$$
-(-\Delta)^s_p\, u(x)
\, =\, 
c(n,s,p)\ P.~\!V.\!\int_{\mathds{R}^n} \frac{|u(x)-u(y)|^{p-2} (u(x)-u(y))}{|x-y|^{n +sp}}\dxy,
$$
with $s\in (0,1)$ and $p>1$.

Now, let us consider in $W^{s,p}(\mathds{R}^n)$, the following functional
\begin{equation}\label{def_F}
\mathcal{F}(u)=\int_{\mathds{R}^n}\int_{\mathds{R}^n} K(x,y)|u(x)-u(y)|^p\,{\rm d}x{\rm d}y.
\end{equation}
In view of the assumptions~\eqref{hp_K} on $K$, one can use the standard Direct Method to prove that there exists a unique $p$-minimizer of $\mathcal{F}$ over all $u\in W^{s,p}(\mathds{R}^n)$ such that $u(x)=g(x)$ for $x\in \mathds{R}^n\setminus \Omega$.
Moreover, a $p$-minimizer $u$ is a weak solution solution to problem \eqref{pb1} and vice versa  
 (see Theorem~\ref{lem_sss} below). 
 
 To specify relevant spaces, for given $g \in W^{s,p}(\mathds{R}^n)$, we define the convex sets of $W^{s,p}(\mathds{R}^n)$ as
 \[
 \mathcal{K}_g^\pm (\Omega) := \left\{ v \in W^{s,p}(\mathds{R}^n) \; : \; (g-v)_\pm \in  W_0^{s,p}(\Omega) Ê\right\}
 \]
 and
 \[
  \mathcal{K}_g (\Omega) :=   \mathcal{K}_g^+ (\Omega) \cap  \mathcal{K}_g^- (\Omega) =  \left\{ v \in W^{s,p}(\mathds{R}^n) \; : \; v-g  \in  W_0^{s,p}(\Omega) Ê\right\}.
 \]
We recall 
that the functions in the space $W^{s,p}_0({\Omega})$ are defined in the whole space, since they are considered to be extended to zero outside~${\Omega}$.

We conclude this section by recalling the definition of weak sub- and supersolutions as well as weak solutions to problem~\eqref{pb1}.
\begin{defn}\label{def_weak} Let $g \in W^{s,p}(\mathds{R}^n)$. A function $u \in  \mathcal{K}_g^-$ $(\mathcal{K}_g^+)$ is a weak {\it subsolution} ({\it supersolution}) to problem \eqref{pb1} if  
\begin{equation}\label{weak_sub}
\int_{\mathds{R}^n}\int_{\mathds{R}^n} K(x,y)|u(x)-u(y)|^{p-2}(u(x)-u(y))(\eta(x)-\eta(y))\,{\rm d}x{\rm d}y \, \leq \, (\geq) \; 0
\end{equation}
for every nonnegative $\eta\in W_0^{s,p}(\Omega) $. 

A function $u$ is a weak {\it solution} to problem \eqref{pb1} if it is both weak sub- and supersolution. In particular, $u$ belongs to $\mathcal{K}_g (\Omega)$ and satisfies 
\begin{equation}\label{weak_sol}
\int_{\mathds{R}^n}\int_{\mathds{R}^n} K(x,y)|u(x)-u(y)|^{p-2}(u(x)-u(y))(\eta(x)-\eta(y))\,{\rm d}x{\rm d}y =  0
\end{equation}
for every $\eta\in W_0^{s,p}(\Omega) $.
 \end{defn}

Similarly, we recall the definition of sub- and superminimizers of \eqref{def_F}. We have
\begin{defn}\label{def_min}
Let $g \in W^{s,p}(\mathds{R}^n)$. A function $u\in \mathcal{K}_g^-$ is a {\it subminimizer} of the functional \eqref{def_F} over $\mathcal{K}_g^-$ if
$
{\displaystyle} \mathcal{F}(u)\leq \mathcal{F}(u+\eta)
$ 
for every nonpositive $\eta\in W_0^{s,p}(\Omega)$. Similarly, a function~$u\in \mathcal{K}_g^+$  is a {\it superminimizer} of the functional \eqref{def_F} over $\mathcal{K}_g^+$ if
$
{\displaystyle} \mathcal{F}(u)\leq \mathcal{F}(u+\eta)
$
for every nonnegative $\eta\in W_0^{s,p}(\Omega)$.

Finally, $u\in \mathcal{K}_g$ is a {\it minimizer} of the functional \eqref{def_F} over $\mathcal{K}_g$ if
$
{\displaystyle} \mathcal{F}(u)\leq \mathcal{F}(u+\eta)
$
for every $\eta\in W_0^{s,p}(\Omega)$.
 \end{defn}

\subsection{Notation}
Before starting with the proofs, it is convenient to fix some notation which will be used throughout the rest of the paper. Firstly, notice that we will follow the usual convention of denoting by $c$ a general positive constant which will not necessarily be the same at different occurrences and which can also change from line to line. For the sake of readability, dependencies of the constants will  be often omitted within the chains of estimates, therefore stated after the estimate. 
Relevant dependences on parameters will be emphasized by using parentheses; special constants will be denoted by $c_0$, $c_1$,....

As customary, we denote by
$$
B_R(x_0)=B(x_0; R):=\Big\{x\in\mathds{R}^n : |x-x_0|<R\Big\}
$$
the open ball centered in $x_0\in\mathds{R}^n$ with radius $R>0$. When not important and clear from the context, we shall use the shorter notation $B_R:=B(x_0;R)$. We denote by $\beta B_R$ the concentric ball scaled by a factor $\beta>0$, that is $\beta B_R:=B(x_0;\beta R)$. Moreover, if $f\in L^1 (S)$ and the $n$-dimensional Lebesgue measure $|S|$ of the set $S\subseteq \mathds{R}^n$ is finite and strictly positive, we write
\begin{equation}\label{mean}
(f)_S:=\dashint_{S} f(x)\,{\rm d}x=\frac{1}{|S|}\int_S f(x)\,{\rm d}x.
\end{equation}

Let $k\in \mathds{R}$, we denote by
\begin{equation}\label{w+}
w_+(x):=(u(x)-k)_+=\max\{ u(x)-k,0\},  
\end{equation}
and
\begin{equation}\label{w-}
w_-(x):=(u(x)-k)_-=(k-u(x))_+.  
\end{equation}
Clearly $w_+(x)\neq 0$ in the set $\big\{x\in S:u(x)>k\big\}$, and $w_-(x)\neq 0$ in the set \newline$\big\{x\in S:u(x)<k\big\}$. 
\vspace{1.5mm}


\vspace{3mm}

\subsection{Existence and uniqueness of the minimizers}
The proof of the existence and uniqueness for fractional minimizers is simple and it is recorded into the following. 

\begin{thm}\label{lem_sss}
Let $s\in (0,1)$ and $p \in [1,\infty)$, and let $g \in W^{s,p}({\mathds R}^n)$. Then there exists a minimizer $u$ of~\eqref{def_F} over $\mathcal{K}_g$. Moreover, if $p>1$, then the solution is unique. Moreover, a function $u\in\mathcal{K}_g$ is a minimizer of~\eqref{def_F} over $\mathcal{K}_g$ if and only if it is a weak solution to problem~\eqref{pb1}. 
\end{thm}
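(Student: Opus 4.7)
The strategy is the standard one: the Direct Method for existence, strict convexity for uniqueness, and a first-variation argument combined with the convexity inequality for the equivalence with weak solutions.

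\emph{Existence.} I would pick a minimizing sequence $\{u_j\} \subset \mathcal{K}_g$. The lower bound in~\eqref{hp_K} gives $[u_j]_{W^{s,p}(\mathds{R}^n)}^p \leq \lambda^{-1}\mathcal{F}(u_j)$, so the Gagliardo seminorms are uniformly bounded. Since $u_j-g \in W^{s,p}_0(\Omega)$ and $\Omega$ is bounded, the fractional Poincar\'e inequality yields $\|u_j-g\|_{L^p(\mathds{R}^n)}\leq c(\Omega)[u_j-g]_{W^{s,p}(\mathds{R}^n)}$, and the triangle inequality then bounds $\{u_j\}$ uniformly in $W^{s,p}(\mathds{R}^n)$. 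Reflexivity provides a (non-relabelled) subsequence with $u_j \rightharpoonup u$, and $u-g \in W^{s,p}_0(\Omega)$ by weak closure of the latter space, so $u \in \mathcal{K}_g$. As the integrand $K(x,y)|v(x)-v(y)|^p$ is convex in the increment $v(x)-v(y)$, the functional $\mathcal{F}$ is weakly lower semicontinuous on $W^{s,p}(\mathds{R}^n)$, so $\mathcal{F}(u) \leq \liminf_j \mathcal{F}(u_j)$ and $u$ is a minimizer.

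\emph{Uniqueness.} For $p>1$, given two minimizers $u_1,u_2$, set $v:=(u_1+u_2)/2 \in \mathcal{K}_g$. The strict convexity of $t\mapsto|t|^p$ together with $K>0$ almost everywhere gives
\begin{equation*}
\mathcal{F}(v) \, \leq \, \tfrac{1}{2}\bigl(\mathcal{F}(u_1)+\mathcal{F}(u_2)\bigr),
\end{equation*}
with strict inequality unless $u_1(x)-u_1(y)=u_2(x)-u_2(y)$ for almost every $(x,y)$. Minimality of $u_1,u_2$ forces equality, so $u_1-u_2$ is almost everywhere constant on $\mathds{R}^n$; since $u_1=u_2=g$ on $\mathds{R}^n\setminus\Omega$, this constant must vanish.

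\emph{Equivalence.} If $u$ is a minimizer, then for every $\eta\in W^{s,p}_0(\Omega)$ and $t\in\mathds{R}$ the function $u+t\eta$ lies in $\mathcal{K}_g$, so $t\mapsto\mathcal{F}(u+t\eta)$ has a minimum at $t=0$. Differentiation under the integral sign, justified by dominated convergence via the elementary bound $\bigl||a+th|^p-|a|^p\bigr| \leq p|t|(|a|+|th|)^{p-1}|h|$, produces exactly~\eqref{weak_sol}. Conversely, if $u$ solves~\eqref{weak_sol} and $v \in \mathcal{K}_g$, I would take $\eta:=v-u\in W^{s,p}_0(\Omega)$ and use the pointwise convexity inequality $|b|^p \geq |a|^p + p|a|^{p-2}a(b-a)$ with $a=u(x)-u(y)$, $b=v(x)-v(y)$; multiplying by $K(x,y)$ and integrating, the linear term in $\eta$ vanishes by~\eqref{weak_sol}, giving $\mathcal{F}(v)\geq\mathcal{F}(u)$. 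The only delicate step is producing an $L^1$ majorant for the difference quotients with respect to the measure $K(x,y)\dxy$; this follows from the pointwise bound above combined with the upper bound $K(x,y)\leq\Lambda|x-y|^{-(n+sp)}$ and H\"older's inequality, using that both $u$ and $\eta$ lie in $W^{s,p}(\mathds{R}^n)$.
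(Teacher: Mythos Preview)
Your approach mirrors the paper's almost exactly: Direct Method for existence, strict convexity for uniqueness, first variation plus a convexity/Young inequality for the equivalence. The paper's converse direction uses Young's inequality in the form $|a|^{p-1}|b| \le \tfrac{p-1}{p}|a|^p + \tfrac1p|b|^p$ rather than your tangent-line inequality $|b|^p \ge |a|^p + p|a|^{p-2}a(b-a)$, but these are interchangeable here.

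There is, however, one genuine gap. The statement covers $p\in[1,\infty)$, and your existence argument invokes reflexivity of $W^{s,p}(\mathds{R}^n)$ to extract a weakly convergent subsequence. That fails at $p=1$. The paper avoids this by using instead the $L^p$ pre-compactness of bounded sets in $W^{s,p}$ (see~\cite[Theorem~6.7]{DPV12}) to pass to a subsequence converging pointwise almost everywhere, and then concludes by Fatou's Lemma that the limit is a minimizer. This route needs no reflexivity and works uniformly for all $p\ge 1$. For $p>1$ your argument is fine, and indeed slightly more detailed than the paper's; to cover $p=1$ you should replace the weak-convergence/lower-semicontinuity step with the pointwise-convergence/Fatou step.
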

\begin{proof}
The proof plainly follows by the Direct Method of Calculus of Variations. One can take any minimizing sequence~$u_j\in \mathcal{K}_g$. Due to the assumptions on the kernel $K$, one can control the fractional seminorm of~$u_j$, so that, one can find by pre-compactness in~$L^{p}$ (see, for instance,~\cite[Theorem 6.7]{DPV12})
a subsequence~$u_{j_k}$ converging pointwise a.~\!e. to a function $u\in \mathcal{K}_g$. By Fatou's Lemma we deduce that~$u$ is actually a minimizer of~\eqref{def_F} over $\mathcal{K}_g$.  The uniqueness in the case $p>1$ follows from the strict convexity of the functional.

Furthermore, the fact that $u$ solves the corresponding Euler-Lagrange equation follows by perturbing $u\in \mathcal{K}_g$ with a test function in a standard way. 
Indeed, supposing that $u\in \mathcal{K}_g$ is a minimizer of~\eqref{def_F} over $\mathcal{K}_g$, take any $\phi \in W_0^{s,p}(\Omega)$ and calculate formally
\begin{align} 
\notag 
  \left. \frac{d}{dt}  \mathcal{F}(u+t\phi) \right|_{t=0}  \ = \ 
\left.   \int_{\mathds{R}^n}\int_{\mathds{R}^n} K(x,y)\frac{\rm d}{{\rm d}t} |u(x)-u(y) + t (\phi(x)-\phi(y))|^p  \,{\rm d}x{\rm d}y \right|_{t=0} 
\\ \notag  \qquad =  \ p \int_{\mathds{R}^n}\int_{\mathds{R}^n} K(x,y) |u(x)-u(y)|^{p-2}(u(x)-u(y)) (\phi(x)-\phi(y))  \,{\rm d}x{\rm d}y \,.
\end{align}
Since $u$ is a minimizer, the term on the left is zero and hence $u\in \mathcal{K}_g$ is a weak solution to problem~\eqref{pb1}. 
For the converse, let $u\in \mathcal{K}_g$ be a weak solution to problem~\eqref{pb1} and take $\phi = u-v\in W_0^{s,p}(\Omega)$, where $v \in  \mathcal{K}_g$. Then, by Young's inequality,
\begin{eqnarray*} 
\notag 0 & = & \int_{\mathds{R}^n}\int_{\mathds{R}^n} K(x,y) |u(x)-u(y)|^{p-2}(u(x)-u(y)) (\phi(x)-\phi(y))  \,{\rm d}x{\rm d}y 
\\ \notag & = & \int_{\mathds{R}^n}\int_{\mathds{R}^n} K(x,y) |u(x)-u(y)|^{p}\,{\rm d}x{\rm d}y 
\\ \notag & & - \int_{\mathds{R}^n}\int_{\mathds{R}^n} K(x,y) |u(x)-u(y)|^{p-2}(u(x)-u(y))(v(x)-v(y))\,{\rm d}x{\rm d}y 
\\ \notag & \geq & \frac1p \int_{\mathds{R}^n}\int_{\mathds{R}^n} K(x,y) |u(x)-u(y)|^{p}\,{\rm d}x{\rm d}y 
\\ \notag & &  -  \frac1p  \int_{\mathds{R}^n}\int_{\mathds{R}^n} K(x,y) |v(x)-v(y)|^{p}\,{\rm d}x{\rm d}y \,,
\end{eqnarray*}
and hence $u$ is a minimizer of~\eqref{def_F} over $\mathcal{K}_g$.
\end{proof}

\vspace{3mm}

\section{Fundamental estimates}\label{sec_log}
In this section, we establish some relevant estimates that we will use in the following. We believe that these results could have their own interest in the analysis of equations involving the (nonlinear) fractional Laplacian and related nonlocal operators. The first of them states a natural extension of the well-known Caccioppoli inequality to the nonlocal framework, by showing that in such a case one can  take into account a suitable tail, in order to detect deeper informations.  
\begin{proof}[Proof of {Theorem~\ref{cacio}}] 
For the sake of generality, we would point out that the present proof  is also valid when $p=1$.

Let $u$ be a weak solution as in the statement. Testing \eqref{weak_sub} with $\eta:=w_+\,\phi^p$, where $\phi$ is any nonnegative function in $C^\infty_0(B_r(x_0))$, we get 
\begin{eqnarray}\label{eq1}
{\displaystyle}
0 & \geq & \int_{B_r}\int_{B_r} K(x,y)|u(x)-u(y)|^{p-2} \\
&& \qquad \quad \ \times (u(x)-u(y))(w_+(x)\phi^p(x)-w_+(y)\phi^p(y))\,{\rm d}x{\rm d}y \nonumber
\\[1ex]
&&  + 2 \int_{\mathds{R}^n\setminus B_r}\int_{B_r} K(x,y)|u(x)-u(y)|^{p-2}\nonumber\\
&& \qquad \qquad \qquad \times (u(x)-u(y))w_+(x)\phi^p(x)\,{\rm d}x{\rm d}y
\nonumber 
\end{eqnarray}
Note that $\eta$ is an admissible test function since truncations of functions in $W^{s,p}({\mathds R}^n)$ still belong to  $W^{s,p}({\mathds R}^n)$.

Let us consider the integrands of the two terms above separately. In the first term, we may assume without loss of generality that $u(x) \geq u(y)$; otherwise just exchange the roles of $x$ and $y$ below. We have
\begin{eqnarray*} 
\notag && |u(x)-u(y)|^{p-2} (u(x)-u(y))(w_+(x)\phi^p(x)-w_+(y)\phi^p(y))
\\[1ex]  \notag & &   \quad = (u(x)-u(y))^{p-1} ((u(x)-k)_+ \phi^p(x) - (u(y)-k)_+\phi^p(y))
\\[1ex]  \notag & & \quad = \left\{ 
\begin{array}{rl}
  (w_+(x)-w_+(y))^{p-1} (w_+(x) \phi^p(x) - w_+(y)\phi^p(y))\,,  &  u(x),u(y) > k   \\
(u(x)-u(y))^{p-1} w_+(x) \phi^p(x)\,,   &     u(x) >  k\,, \; u(y) \leq k  \\
  0, &   \mbox{otherwise}   
\end{array}
\right.
\\[1ex]  \notag & & \quad \geq 
  (w_+(x)-w_+(y))^{p-1} (w_+(x) \phi^p(x) - w_+(y)\phi^p(y)).
\end{eqnarray*}
For the second term in the right hand-side of the inequality in~\eqref{eq1} we instead have
\begin{eqnarray*} 
\notag |u(x)-u(y)|^{p-2}(u(x)-u(y))w_+(x)  & \geq & - (u(y)-u(x))_+^{p-1} (u(x)-k)_+
\\  \notag  \qquad & \geq & - (u(y)-k)_+^{p-1} (u(x)-k)_+
\\  \notag  \qquad & = & -w_+(y)^{p-1} w_+(x),
\end{eqnarray*}
and estimating further we obtain
\begin{eqnarray*}
&& \int_{\mathds{R}^n\setminus B_r}\int_{B_r} K(x,y)|u(x)-u(y)|^{p-2}(u(x)-u(y))w_+(x)\phi^p(x)\,{\rm d}x{\rm d}y\\
&& \qquad \qquad\quad\quad   \geq -\int_{\mathds{R}^n\setminus B_r}\int_{B_r} K(x,y)w_+^{p-1}(y)w_+(x)\phi^p(x)\,{\rm d}x{\rm d}y\\
&&\quad\qquad\quad\quad\quad   \geq -\left(\sup_{x\in{\rm supp}\,\phi}\int_{\mathds{R}^n\setminus B_r} K(x,y)w_+^{p-1}(y)\,{\rm d}y\right) \int_{B_r} w_+(x)\phi^p(x)\,{\rm d}x.
\end{eqnarray*} 
We thus deduce from~\eqref{eq1} that
\begin{eqnarray}\label{eq111111}
{\displaystyle}
0 & \geq &\ \int_{B_r}\int_{B_r} K(x,y)|w_+(x)-w_+(y)|^{p-2} \\
&& \qquad \qquad  \times (w_+(x)-w_+(y))(w_+(x)\phi^p(x)-w_+(y)\phi^p(y))\,{\rm d}x{\rm d}y \nonumber
\\[1ex]
&& \notag \quad  -2 \left(\sup_{x\in{\rm supp}\,\phi}\int_{\mathds{R}^n\setminus B_r} K(x,y)w_+^{p-1}(y)\,{\rm d}y\right) \int_{B_r} w_+(x)\phi^p(x)\,{\rm d}x. 
\end{eqnarray}
 
Let us then consider the first term in the  inequality above. If $w_+(x) \geq w_+(y)$ and $\phi(x) \leq \phi(y)$ in the integrand, we appeal to Lemma~\ref{lemma:trivial conv} below and get 
\begin{equation}\label{eq_11gat}
\phi^p(x)\geq (1- c_p \,\varepsilon )\, \phi^p(y)-(1+c_p \eps) \,\varepsilon^{1-p}|\phi(x)-\phi(y)|^p
\end{equation}
for any $\varepsilon\in (0,1]$ with the constant $c_p \equiv (p-1)\Gamma(\max\{1,p-2\})$.
Thus, by choosing 
$$
\varepsilon := \frac{1}{\max\{1,2 c_p\}} \,\frac{w_+(x)-w_+(y)}{w_+(x)}\in(0,1]
$$ 
we get
 \begin{eqnarray*} 
\notag
(w_+(x)-w_+(y))^{p-1} w_+(x)\phi^p(x) & \geq & (w_+(x)-w_+(y))^{p-1} w_+(x) (\max\{\phi(x),\phi(y)\})^p
\\ \notag & & - \frac12 \,(w_+(x)-w_+(y))^p (\max\{\phi(x),\phi(y)\})^p
\\ \notag & & - c(\max\{w_+(x),w_+(y)\})^p |\phi(x)-\phi(y)|^p
\end{eqnarray*}
with $c \equiv c(p)$. Recall that in the estimate above we assumed that $\phi(x) \leq \phi(y)$, $\max\{\phi(x),\phi(y)\} = \phi(y)$. However, when $0 = w_+(x) \geq w_+(y) \geq 0$ or $w_+(x) \geq w_+(y)$ and $\phi(x)\geq \phi(y)$, the estimate in the  display above is trivial and hence we conclude that it holds also in these cases. It follows that
\begin{eqnarray*} 
\notag && \hspace{-0.5cm}(w_+(x)-w_+(y))^{p-1} (w_+(x)\phi^p(x) - w_+(y)\phi^p(y))
\\[1ex] \notag &&\qquad \quad  \geq 
(w_+(x)-w_+(y))^{p-1} (w_+(x) (\max\{\phi(x),\phi(y)\})^p - w_+(y)\phi^p(y))
\\ \notag & & \qquad \quad \quad - \frac12 \,(w_+(x)-w_+(y))^p (\max\{\phi(x),\phi(y)\})^p
\\ \notag & & \qquad \quad \quad - c(\max\{w_+(x),w_+(y)\})^p |\phi(x)-\phi(y)|^p
\\[1ex] \notag &&\qquad \quad  \geq 
\frac12 \,(w_+(x)-w_+(y))^p( \max\{\phi(x),\phi(y)\})^p 
\\ \notag & & \qquad \quad \quad -  c(\max\{w_+(x),w_+(y)\})^p |\phi(x)-\phi(y)|^p
\end{eqnarray*}
whenever $w_+(x) \geq  w_+(y)$.
If, on the other hand, $w_+(y) > w_+(x)$ in the integrand, we may interchange the roles of $x$ and $y$ in the  display above by analogous reasoning. Hence we arrive in all cases at
\begin{eqnarray}
&& \notag \!\!\!\!\!\!\!\!  \int_{B_r}\int_{B_r} K(x,y)|w_+(x)-w_+(y)|^{p-2} \\
&& \notag\qquad \qquad   \times (w_+(x)-w_+(y))(w_+(x)\phi^p(x)-w_+(y)\phi^p(y))\,{\rm d}x{\rm d}y \nonumber \\[1ex]
&& \label{eq111112} \qquad \geq \ \frac12  \int_{B_r}\int_{B_r} K(x,y)|w_+(x)-w_+(y)|^{p}(\max\{\phi(x),\phi(y)\})^p\,{\rm d}x{\rm d}y\\[1ex]
&& \notag \qquad  \quad \ - c \int_{B_r}\int_{B_r} K(x,y)(\max\{w_+(x),w_+(y)\})^p |\phi(x)-\phi(y)|^p\,{\rm d}x{\rm d}y.
\end{eqnarray}
Observing finally that
\begin{eqnarray*} 
\notag
 |w_+(x)\phi(x) - w_+(y)\phi(y)|^p 
   & \leq & 2^{p-1} |w_+(x)-w_+(y)|^p (\max\{\phi(x),\phi(y)\})^p 
\\ && \notag  + 2^{p-1}  (\max\{w_+(x),w_+(y)\})^p |\phi(x)-\phi(y)|^p
\end{eqnarray*}
and combining this with~\eqref{eq111111} and~\eqref{eq111112} concludes the proof of~\eqref{cacio1} for $w_+$. 

In order to prove the estimate in~\eqref{cacio1} for $w_-$, it will suffice  to proceed as above, using the function $\eta=-w_-\,\phi$, instead of $\eta=w_+\,\phi$, as a test function in the weak formulation of problem \eqref{pb1}. \end{proof}

Above we made use of the following trivial but very useful small lemma.
\begin{lemma} \label{lemma:trivial conv} Let $p\geq 1$ and $\eps \in (0,1]$. Then
\[
|a|^p \leq |b|^p + c_p \eps  |b|^p + (1+c_p\eps)\eps^{1-p} |a-b|^p , \qquad c_p := (p-1) \Gamma(\max\{1,p-2\}) , 
\]
holds for every $a,b \in \mathds{R}^m$, $m \geq 1$. Here $\Gamma$ stands for the standard Gamma function.
\end{lemma}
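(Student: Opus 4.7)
My plan is to handle the trivial case $p=1$ by the triangle inequality (which is exactly the statement when $c_1=0$), and then for $p>1$ reduce the vector inequality to a one‑dimensional Young‑type inequality, which I would prove by a weighted Jensen argument followed by a Bernoulli‑type bound on $(1+\eps)^{p-1}$.

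First, since $p\geq 1$, the triangle inequality combined with the monotonicity of $r\mapsto r^p$ on $[0,\infty)$ gives $|a|^p\leq (s+t)^p$, where I set $s:=|b|$ and $t:=|a-b|$. So the claim reduces to the scalar inequality
\[
(s+t)^p \,\leq\, (1+c_p\eps)\, s^p \,+\, (1+c_p\eps)\,\eps^{1-p}\, t^p \qquad \text{for all } s,t\geq 0.
\]
To get this, I would write $s+t$ as the convex combination
\[
s+t \,=\, \frac{1}{1+\eps}\cdot (1+\eps)s \,+\, \frac{\eps}{1+\eps}\cdot \frac{(1+\eps)t}{\eps},
\]
and apply Jensen's inequality to the convex function $r\mapsto r^p$. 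This immediately yields
\[
(s+t)^p \,\leq\, (1+\eps)^{p-1}\,\bigl(s^p + \eps^{1-p}t^p\bigr),
\]
so the full claim is reduced to showing the one‑variable estimate $(1+\eps)^{p-1}\leq 1+c_p\eps$ on $\eps\in(0,1]$.

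For this last estimate I would use the identity $(1+\eps)^{p-1}-1=(p-1)\int_0^\eps (1+r)^{p-2}\,dr$. When $1<p\leq 2$ the integrand is bounded by $1$, giving the classical Bernoulli bound with constant $p-1$. When $p>2$ one needs a finer estimate: controlling $(1+r)^{p-2}$ on $[0,1]$ via iterated Taylor remainders (or equivalently through the generalized binomial series $(1+\eps)^{p-1}=\sum_{k\geq 0}\binom{p-1}{k}\eps^k$, whose coefficients involve $\Gamma$‑function factors) produces a constant of the stated form.

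The step I expect to be the main obstacle is precisely this last one for $p>2$: a crude pointwise bound $(1+r)^{p-2}\leq 2^{p-2}$ on $[0,1]$ only yields $c_p\lesssim (p-1)2^{p-2}$, so recovering the Gamma‑function form $c_p=(p-1)\Gamma(\max\{1,p-2\})$ requires a more careful analysis of the Taylor remainder and of the growth of the higher derivatives of $(1+r)^{p-1}$ on $[0,1]$.
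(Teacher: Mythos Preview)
Your approach is essentially identical to the paper's: triangle inequality, then the weighted convexity (Jensen) decomposition to reduce to $(1+\eps)^{p-1}\le 1+c_p\eps$, and finally the integral identity $(1+\eps)^{p-1}-1=(p-1)\int_0^\eps(1+r)^{p-2}\,dr$ to control this factor. The only thing you leave open is exactly the step the paper also treats tersely: for $p>2$ the paper simply bounds the integrand by $\max\{1,(1+\eps)^{p-2}\}$ and then \emph{iterates} the very same estimate on $(1+\eps)^{p-2}$, $(1+\eps)^{p-3}$, etc., picking up the falling-factorial/Gamma factor; your ``iterated Taylor remainders'' idea is precisely this iteration, so you are not missing anything substantive.
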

\begin{proof}
By the triangle inequality and convexity we obtain 
\begin{eqnarray*} 
\notag |a|^p & \leq & (|b| + |a-b|)^p 
\\Ê\notag & = &  (1+\eps)^p \left(\frac{1}{1+\eps} |b| + \frac{\eps}{1+\eps}  \frac{|a-b|}{\eps} \right)^p
\\Ê\notag & \leq & (1+\eps)^{p-1} |b|^p + \left(\frac{1+\eps}{\eps}\right)^{p-1} |a-b|^p .
\end{eqnarray*}
Estimating 
\[
(1+\eps)^{p-1} = 1 + (p-1)\int_{1}^{1+\eps} t^{p-2} \, dt \leq 1 + 
\eps(p-1) \max\{1,(1+\eps)^{p-2}\},
\]
and then iterating, to get the Gamma function bound, concludes the proof.
\end{proof}

We would like to recall that, as in the classic local case,  the proven {\it Caccioppoli estimates with tail}\, encode basically all the informations deriving from the minimum property of the functions $u$ for what concerns the corresponding H\"older continuity.

\vspace{3mm}

We next show the validity of the second main tool, that is the {\it Logarithmic Lemma}~\ref{log_lemma}.

\begin{proof}[Proof of Log {Lemma~\ref{log_lemma}}]
Let $d>0$ be a real parameter and let $\phi \in C^\infty_0(B_{3r/2})$ be such that
$$
0\leq \phi\leq 1, \quad \phi\equiv 1 \  \text{in}\  B_{r} \quad\text{and}\quad |D\phi|<c\,r^{-1} \ \text{in} \ B_r \subset B_{R/2}.
$$
We use in the weak formulation of  problem \eqref{pb1}, the test function~$\eta$ defined by
$$
\eta=(u+d)^{1-p}\phi^p.
$$
Note that since $u \geq 0$ in the support of $\phi$, the test function is well-defined. 
We get
\begin{eqnarray}\label{6star}
{\displaystyle}
0 & = & \int_{B_{2r}}\int_{B_{2r}}K(x,y)|u(x)-u(y)|^{p-2}(u(x)-u(y))
\nonumber \\
&&  \qquad  \qquad \times\left[\frac{\phi^p(x)}{(u(x)+d)^{p-1}}-\frac{\phi^p(y)}{(u(y)+d)^{p-1}}\right]{\rm d}x{\rm d}y \nonumber \\ \nonumber
&& + 2 \int_{\mathds{R}^n\setminus B_{2r}}\int_{B_{2r}}K(x,y)|u(x)-u(y)|^{p-2}\frac{u(x)-u(y)}{(u(x)+d)^{p-1}}\phi^p(x)\,{\rm d}x{\rm d}y\\
&=:& I_1+I_2. 
\end{eqnarray}
If $u(x) > u(y)$, for the integrand of $I_1$,  we use the inequality in Lemma~\ref{lemma:trivial conv}, by choosing there $a=\phi(x)$, $b=\phi(y)$ and
$$
\varepsilon=\delta\,\frac{u(x)-u(y)}{u(x)+d}\in(0,1),\quad \text{with } \delta\in(0,1),
$$ 
since $u(y)\geq 0$ for any $y\in B_{2r}\subset B_R$. 
It follows that
\begin{eqnarray}\label{eq_13gat}
&& \hspace{-0.5cm} K(x,y)|u(x)-u(y)|^{p-2}(u(x)-u(y))\left[\frac{\phi^p(x)}{(u(x)+d)^{p-1}}-\frac{\phi^p(y)}{(u(y)+d)^{p-1}}\right]\nonumber\\[0.8ex]
&&  \quad   \quad\leq  K(x,y)\frac{(u(x)-u(y))^{p-1}}{(u(x)+d)^{p-1}}\phi^p(y)\left[1+c\,\delta\frac{u(x)-u(y)}{u(x)+d}-\left(\frac{u(x)+d}{u(y)+d}\right)^{p-1}\right]\nonumber\\
&&  \quad  \quad  \quad+ \,c\, \delta^{1-p}K(x,y)|\phi(x)-\phi(y)|^p,\nonumber
\end{eqnarray}
where $c\equiv c(p)$.
Observe that the first term that appears in the right-hand side of the previous inequality can be rewritten as
\begin{equation}\label{eq_13gat2}
K(x,y)\left(\frac{u(x)-u(y)}{u(x)+d}\right)^p\phi^p(y)\left[\frac{1-\left(\frac{u(y)+d}{u(x)+d}\right)^{1-p}}{1-\frac{u(y)+d}{u(x)+d}}+c\,\delta\right]=:J_1.
\end{equation} 

Now, consider the real function $t \mapsto g(t)$ given by
$$
{\displaystyle}
g(t):=\frac{1-t^{1-p}}{1-t}=-\frac{p-1}{1-t}\int_{t}^{1}\tau^{-p}\,{\rm d}\tau, \quad \forall\, t\in(0,1).
$$
We have that $g$ is an increasing function in $t$, since
$$
t\mapsto \frac{1}{1-t}\int_{t}^{1}\tau^{-p}\,{\rm d}\tau
$$ 
is a decreasing function (recall that $p>1$). Thus
$$
g(t)\leq -(p-1)\quad \forall \, t\in(0,1).
$$ 
Moreover, if $t\leq 1/2$, then
$$
g(t)\leq -\frac{p-1}{2^{p}}\, \frac{t^{1-p}}{1-t}.
$$
Therefore, if 
$$
t=\frac{u(y)+d}{u(x)+d}\in (0,\, 1/2]; 
$$
that is, 
$$
u(y)+d\leq \frac{u(x)+d}{2},
$$
then, since
$
{(u(x)-u(y))(u(y)+d)^{p-1}}/{(u(x)+d)^p}\leq 1$, we get
\begin{equation}\label{eq_14gatbis}
J_1
\, \leq \, K(x,y)\left(c\,\delta-\frac{p-1}{2^{p}}\right)\left[\frac{u(x)-u(y)}{u(y)+d}\right]^{p-1} \phi^p(y),
\end{equation}
Choosing
\begin{equation}\label{def_delta}
\delta=\frac{p-1}{2^{p+1}\,c},
\end{equation}
we get
\begin{equation*}
J_1\leq - K(x,y)\,\frac{p-1}{2^{p+1}}\left[\frac{u(x)-u(y)}{u(y)+d}\right]^{p-1}
\end{equation*}
If, on the other hand, 
$$
t=\frac{u(y)+d}{u(x)+d}\in(1/2,\, 1),
$$
that is,
$$
u(y)+d> \frac{u(x)+d}{2},
$$
then
$$
J_1
\, \leq\, K(x,y)\,[c\delta-(p-1)]\left[\frac{u(x)-u(y)}{u(x)+d}\right]^{p} \phi^p(y),
$$\\
and so, by the choice of $\delta$ in \eqref{def_delta}, we finally get
\begin{equation}\label{eq_14gat}
J_1\leq -K(x,y)\,\frac{(2^{p+1}-1)(p-1)}{2^{p+1}}\left[\frac{u(x)-u(y)}{u(x)+d}\right]^{p}\!\phi^p(y).
\end{equation} 
Furthermore, if $2(u(y)+d)<u(x)+d$, then 
\begin{equation}\label{eq_14gat2}
\left[\log\left(\frac{u(x)+d}{u(y)+d}\right)\right]^p
\,\leq\, c \left[\frac{u(x)-u(y)}{u(y)+d}\right]^{p-1}
\end{equation} 
holds with $c \equiv c(p)$. On the other hand, if $2(u(y)+d)\geq u(x)+d$, recalling that we have assumed $u(x) > u(y)$, then
\begin{equation}\label{eq_14gat3}
\left[\log\left(\frac{u(x)+d}{u(y)+d}\right)\right]^p
\, = \,\left[\log\left(1+ \frac{u(x)-u(y)}{u(y)+d}\right)\right]^{p}
\,\leq \, 2^p\left(\frac{u(x)-u(y)}{u(x)+d}\right)^{p},
\end{equation}
where we have used
$$
\log(1+\xi)\leq \xi, \quad \forall\,\, \xi\geq 0, \quad \text{with}\,\,\, \xi=\frac{u(x)-u(y)}{u(y)+d}\leq \frac{2[u(x)-u(y)]}{u(x)+d}.
$$ 

Thus, combining~\eqref{eq_13gat} with \eqref{eq_14gatbis}, \eqref{eq_14gat}, \eqref{eq_14gat2} and~\eqref{eq_14gat3},
we conclude with
\begin{eqnarray*}
&& \hspace{-0.5cm} K(x,y)|u(x)-u(y)|^{p-2}(u(x)-u(y))\left[\frac{\phi^p(x)}{(u(x)+d)^{p-1}}-\frac{\phi^p(y)}{(u(y)+d)^{p-1}}\right]\nonumber\\
&&  \quad   \quad\leq \ - \frac{1}{c} \,  K(x,y)\left[\log\left(\frac{u(x)+d}{u(y)+d}\right)\right]^p \phi^p(y) + \,c\, \delta^{1-p}K(x,y)|\phi(x)-\phi(y)|^p.\nonumber
\end{eqnarray*}
Observe that when $u(x)=u(y)$, then the  estimate above holds trivially. If, on the other hand, $u(y)>u(x)$ we can again exchange the roles of $x$ and $y$ in the  computations above. We finally get for the first term in~\eqref{6star} that
\begin{eqnarray}\label{I1}
I_1 &\leq& -\frac{1}{c}\int_{B_{2r}}\int_{B_{2r}} K(x,y)\left|\log\left(\frac{u(x)+d}{u(y)+d}\right)\right|^p \phi^p(y)\,{\rm d}x{\rm d}y\\
\nonumber&&+c\int_{B_{2r}}\int_{B_{2r}} K(x,y)|\phi(x)-\phi(y)|^p\,{\rm d}x{\rm d}y
\end{eqnarray}
for a constant $c\equiv c(p)$ by the choice of $\delta$.

For the second contribution in~\eqref{6star}, namely $I_2$, we can proceed as follows.
First of all, notice that when $y\in B_R$, $u(y)\geq 0$ and so
$$
\frac{(u(x)-u(y))_+^{p-1}}{(d+u(x))^{p-1}}\leq 1\quad \text{ for all } x \in B_{2r}, \; y \in B_R.
$$
Moreover, when $y\in \mathds{R}^n\setminus B_R$,
$$
(u(x)-u(y))_+^{p-1}\leq 2^{p-1}[u^{p-1}(x)+(u(y))_-^{p-1}]\quad \text{ for all }  x \in B_{2r}.
$$
Therefore,
\begin{eqnarray}\label{eq_14fs}
I_2& \leq & 2 \int_{B_R\setminus B_{2r}}\int_{B_{2r}} K(x,y)(u(x)-u(y))_+^{p-1}(d+u(x))^{1-p}\,\phi^p(x)\,{\rm d}x{\rm d}y
\nonumber\\
&& +2 \int_{\mathds{R}^n\setminus B_R}\int_{B_{2r}} K(x,y)(u(x)-u(y))_+^{p-1}(d+u(x))^{1-p} \phi^p(x)\,{\rm d}x{\rm d}y
\nonumber 
\\[1ex]
&\leq &c  \int_{\mathds{R}^n\setminus B_{2r}}\int_{B_{2r}} K(x,y)\phi^p(x)\,{\rm d}x{\rm d}y  \\
&& \quad + c d^{1-p}\, \int_{\mathds{R}^n\setminus B_R}\int_{B_{2r}}  K(x,y)(u(y))_-^{p-1}\,{\rm d}x{\rm d}y\nonumber
\end{eqnarray}
follows for $c\equiv c(p)$.
By the assumptions on $K$ and the fact that the support of $\phi$ belongs to $B_{3r/2}$, we have
\begin{equation}\label{eq_14fs2}
\int_{\mathds{R}^n\setminus B_{2r}}\int_{B_{2r}}K(x,y)\phi^p(x)\,{\rm d}x{\rm d}y 
\, \leq \, c\sup_{x\in B_{3r/2}}r^n\int_{\mathds{R}^n\setminus B_{2r}} K(x,y)\,{\rm d}y \leq c r^{n-sp}
\end{equation}
and
\begin{eqnarray}\label{eq_15fs}
 \int_{\mathds{R}^n\setminus B_R}\int_{B_{2r}} K(x,y)(u(y))_-^{p-1}\,{\rm d}x{\rm d}y &\leq &  c \,|B_r|  \int_{\mathds{R}^n\setminus B_R}\frac{(u(y))_-^{p-1}}{|y-x_0|^{n+sp}}\,{\rm d}y \nonumber \\[1ex]
&\leq &c\, \frac{r^n}{R^{sp}}\,\left[{\rm Tail}(u_-;x_0,R)\right]^{p-1},
\end{eqnarray}
where we also used that, for any $x\in B_r$, $y\in \mathds{R}^n \setminus B_R$ and $2r\leq R$,
$$
\frac{|y-x_0|}{|y-x|}\leq 1+\frac{|x-x_0|}{|x-y|}\leq 1+\frac{r}{R-r}\leq 2.
$$ 
By combining~\eqref{eq_14fs} with~\eqref{eq_14fs2} and~\eqref{eq_15fs}, we  obtain
\begin{eqnarray}\label{eq_15fs2}
{\displaystyle}
I_2& \leq & 
c\,\int_{B_{2r}}\int_{B_{2r}} K(x,y)|\phi(x)-\phi(y)|^p\,{\rm d}x{\rm d}y + c r^{n-sp} \nonumber\\
&& +c\,d^{1-p}\,r^{n}\, R^{-sp}\,\left[{\rm Tail}(u_-;x_0,R)\right]^{p-1},
\nonumber
\end{eqnarray}
which, together with~\eqref{I1} in \eqref{6star}, yields
 \begin{eqnarray}\label{est1}
&& \nonumber  \hspace{-0.5cm} \int_{B_{2r}}\int_{B_{2r}} K(x,y)\left|\log\left(\frac{u(x)+d}{u(y)+d}\right)\right|^p\phi^p(y)\,{\rm d}x{\rm d}y \\[1ex]
&& \qquad \qquad \qquad \leq \, c\,\int_{B_{2r}}\int_{B_{2r}} K(x,y)|\phi(x)-\phi(y)|^p\,{\rm d}x{\rm d}y
\\[1ex]
\nonumber 
&& \qquad \qquad \qquad \quad +c\,d^{1-p}\,r^{n}\, R^{-sp}\,\left[{\rm Tail}(u_-;x_0,R)\right]^{p-1} +  c r^{n-sp}.
\end{eqnarray}
 
Finally, in order to conclude the proof, we need the following estimate
\begin{eqnarray}\label{phi}
\nonumber \int_{B_{2r}}\int_{B_{2r}}K(x,y)|\phi(x)-\phi(y)|^p\,{\rm d}x{\rm d}y & \leq &cr^{-p} \int_{B_{2r}}\int_{B_{2r}}| x-y|^{-n+p(1-s)} \,{\rm d}x{\rm d}y \\  
&\leq& \frac{c}{p(1-s)}\, r^{-sp} |B_{2r}|,
\end{eqnarray}
where we used the bound from above on the kernel $K$ and the fact that we are assuming $|D \phi|\leq c\, r^{-1}$.
The proof of \eqref{log_estimate} is finished.
\end{proof}

A first consequence of the Logarithmic Lemma is the following
\begin{corollary}\label{poincare+cacio}
Let $p\in (1,\infty)$ and let $u\in W^{s,p}(\mathds{R}^n)$ be the solution to  problem \eqref{pb1} such that $u\geq 0$ in $B_R \equiv B_R(x_0)  \subset \Omega$. Let $a,d>0$, $b>1$ and define
\[
v := \min\left\{\left(\log(a+d) - \log(u+d)  \right)_+, \, \log(b) \right\}.
\]
Then the following estimate holds true, for any $B_r\equiv B_r(x_0)\subset B_{R/2}(x_0)$,   
\begin{eqnarray}\label{log_estimate2}
 \dashint_{B_{r}} |v-(v)_{B_{r}}|^p\,{\rm d}x \nonumber  
& \leq & c  \left\{d^{1-p}\,\left(\frac{r}{R}\right)^{sp}\,\left[{\rm Tail}(u_-;x_0,R)\right]^{p-1}+1\right\}\!,
\end{eqnarray}
where ${\rm Tail}(u_-;x_0,R)$ is defined by~\eqref{Tail} and $c$ depends only on $n$, $p$, $s$, $\lambda$ and $\Lambda$. 
\end{corollary}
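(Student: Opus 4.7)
The plan is to reduce the corollary to the Logarithmic Lemma via a fractional Poincar\'e inequality on the ball $B_r$ applied to the bounded function $v$. The key point is that although $v$ is not a solution-derived quantity in any direct way, its Gagliardo seminorm on $B_r$ is controlled by that of $\log(u+d)$; combining this fact with a Poincar\'e estimate and with the lower bound on $K$ from~\eqref{hp_K} reduces the statement to a direct application of Lemma~\ref{log_lemma}.

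First I would observe that the cutoff map $h(t):=\min\{t_+,\log b\}$ is a composition of two $1$-Lipschitz truncations and hence is itself $1$-Lipschitz on $\mathds{R}$. Writing $v(x)=h(\log(a+d)-\log(u(x)+d))$ then yields the pointwise bound
\[
|v(x)-v(y)|\,\leq\,\left|\log\!\left(\frac{u(x)+d}{u(y)+d}\right)\right|\qquad\text{for a.~\!e.~}x,y\in B_r.
\]
Since $|v|\leq \log b$ in $B_r$, the function $v$ lies in $W^{s,p}(B_r)$ and the standard fractional Poincar\'e inequality on a ball gives
\[
\dashint_{B_r}|v-(v)_{B_r}|^p\,{\rm d}x\,\leq\, c(n,p,s)\,r^{sp-n}\int_{B_r}\!\int_{B_r}\frac{|v(x)-v(y)|^p}{|x-y|^{n+sp}}\,{\rm d}x{\rm d}y.
\]

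Invoking the lower bound $K(x,y)\geq \lambda |x-y|^{-n-sp}$ from~\eqref{hp_K} together with the pointwise Lipschitz bound above controls the right-hand side, up to a multiplicative factor $c\lambda^{-1}r^{sp-n}$, by the double integral appearing on the left-hand side of Lemma~\ref{log_lemma}. Applying that lemma produces the factor $c\,r^{n-sp}\{d^{1-p}(r/R)^{sp}[{\rm Tail}(u_-;x_0,R)]^{p-1}+1\}$, whose $r^{n-sp}$ cancels exactly with the $r^{sp-n}$ from Poincar\'e, yielding the asserted estimate with constants depending only on $n,p,s,\lambda,\Lambda$. I do not anticipate a substantive obstacle; the only point requiring care is the Lipschitz observation, which is what guarantees that the double truncation defining $v$ does not inflate its seminorm and thereby allows the Log Lemma to be used in place of a direct estimate on $v$ itself.
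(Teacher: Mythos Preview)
Your proposal is correct and follows essentially the same route as the paper: apply the fractional Poincar\'e inequality on $B_r$, use that $v$ is a $1$-Lipschitz truncation of $\log(u+d)$ (plus a constant) to bound its Gagliardo seminorm by the logarithmic one, and invoke Lemma~\ref{log_lemma}. The only cosmetic difference is that the paper absorbs the lower bound on $K$ into the constant of the Poincar\'e step rather than inserting it afterwards, but the argument is identical.
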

\begin{proof}
By the fractional Poincar\'e type inequality (see, e.~\!g., Proposition 5.1, Formula~(6.3) in \cite{Min03}) and the assumption in~\eqref{hp_K} for $K$ we get
\begin{equation*}
\dashint_{B_{r}}|v-(v)_{B_{r}}|^p\,{\rm d}x 
\, \leq \,
c\, r^{sp-n} \int_{B_{r}}\int_{B_{r}}  K(x,y)|v(x)-v(y)|^p\,{\rm d}x{\rm d}y 
\end{equation*}
with a constant $c\equiv c(n,p,s,\lambda,\Lambda)$. Now observe that $v$ is a truncation of the sum of a constant and $\log(u+d)$ and hence it follows that 
\begin{eqnarray*}
\int_{B_{r}}\int_{B_{r}}K(x,y) |v(x)-v(y)|^p \dxy
\, \leq\,
 \int_{B_{r}}\int_{B_{r}} K(x,y)\left| \log\left( \frac{u(y)+d}{u(x)+d} \right) \right|^p  \dxy. 
\end{eqnarray*}
At this stage, in order to conclude, it just suffices to apply the estimate in~\eqref{log_estimate}. 
\end{proof}

\vspace{3mm}

\section{Local boundedness}\label{sec_sup}
In this section, we prove the local boundedness for the fractional $p$-minimizers of the functional \eqref{def_F}, as stated in Theorem~\ref{sup}.
\begin{proof}[Proof of {Theorem~\ref{sup}}]
Before starting, let us give some definitions. For any $j\in \mathds{N}$ and $r>0$ such that $B_r(x_0)\subset\Omega$,
\begin{equation}\label{raggi}
r_j=\frac 12(1+2^{-j})r,\quad \tilde{r}_j=\frac{r_j+r_{j+1}}{2}, 
\end{equation}
\begin{equation*}
B_j=B_{r_j}(x_0),\quad \tilde{B}_j=B_{\tilde{r}_j}(x_0).
\end{equation*}
Moreover, take
\begin{equation*}
\phi_j\in C_0^\infty(\tilde{B}_j), \,\,\, 0\leq \phi_j\leq 1,\,\,\,  \phi_j\equiv 1\text{ on } B_{j+1}, \,\,\,\text{and} \,\,\, |D\phi_j|<2^{j+3}/r, 
\end{equation*}
\begin{equation*} 
k_j=k+(1-2^{-j})\tilde{k}, \quad \tilde{k}_j=\frac{k_{j+1}+k_j}{2}, \quad \tilde{k}\in \mathds{R}^+ \text{ and } k\in \mathds{R}.
\end{equation*}
\begin{equation}\label{w_tilde}
\tilde{w}_j=(u-\tilde{k}_j)_+\quad\text{and}\quad w_j=(u-k_j)_+.
\end{equation}

By the fractional Poincar\'e inequality applied to the function $\tilde{w}_j\phi_j$, as defined above, together with the properties of the kernel $K$, we plainly get
\begin{eqnarray}\label{riviere}
&&  \left| \left(\dashint_{B_j}|\tilde{w}_j(x)\phi_j(x)|^{p^*}\,{\rm d}x\right)^{\!\frac{1}{p^*}} \!
 - \left|\dashint_{B_j} \tilde{w}_j(x)\phi_j(x)\,{\rm d}x\right| \right|^p \nonumber \\
 &&\qquad \qquad  \leq \,
c \,\left( \dashint_{B_j}|\tilde{w}_j\phi_j - (\tilde{w}_j\phi_j)_{B_j}|^{p^*}\,{\rm d}x\right)^{\!\frac{p}{p^*}} \\
&& \qquad \qquad \leq  \,
 c\, \frac{r^{sp}}{r^n}\int_{B_j}\int_{B_j}K(x,y)|\tilde{w}_j(x)\phi_j(x)-\tilde{w}_j(y)\phi_j(y)|^p \,{\rm d}x{\rm d}y, \nonumber
\end{eqnarray}
where $p^*=np/(n-sp)$ is the critical exponent for fractional Sobolev embeddings, so that we are now dealing with the case when $sp<n$.

Using the nonlocal Caccioppoli inequality with tail given by~\eqref{cacio1}, with $w_+=\tilde{w}_j$ and $\phi=\phi_j$ there, we arrive at 
\begin{eqnarray}\label{sob+cacio}
\nonumber
&& \left(\dashint_{B_j}|\tilde{w}_j(x)\phi_j(x)|^{p^*}\,{\rm d}x\right)^{\frac{p}{p^*}} \\[1ex]
&& \qquad \qquad \quad \leq  c r^{sp}\dashint_{B_j}\int_{B_j}K(x,y)(\max\{\tilde{w}_j(x), \tilde{w}_j(y)\})^p|\phi_j(x)-\phi_j(y)|^p \,{\rm d}x{\rm d}y\\[1ex]
&& \qquad \qquad\qquad   + \,c r^{sp}\dashint_{B_j}\tilde{w}_j(y)\phi^p_j(y) \,{\rm d}y\left(\sup_{y\in{\rm supp}\,\phi_j}\int_{\mathds{R}^n\setminus B_j}K(x,y)\tilde{w}_j^{p-1}(x)\,{\rm d}x\right) \nonumber
\\
&& \qquad \qquad \qquad 
+ \,c \,\dashint_{B_j} |\tilde{w}_j(x)\phi_j(x)|^{p}\,{\rm d}x.
 \nonumber
\end{eqnarray}

By the definition of $\phi_j$ and the assumption \eqref{hp_K}, we obtain the following estimate for the first term in the right hand-side of the  inequality above,
\begin{eqnarray}\label{10star}
&& \hspace{-1cm} r^{sp}\dashint_{B_j}\int_{B_j} K(x,y)(\max\{\tilde{w}_j(x), \tilde{w}_j(y)\})^p|\phi_j(x)-\phi_j(y)|^p\,{\rm d}x{\rm d}y 
\nonumber \\ [1ex]
&& \qquad\qquad\qquad \qquad \leq \, 
c  2^{jp} \frac{ r^{sp}}{r^p}\,\dashint_{B_j}w_j^p(y)\left(\int_{B_j}\frac{{\rm d}x}{|x-y|^{n-p(1-s)}}\right){\rm d}y\nonumber \\ 
&& \qquad\qquad \qquad\qquad \leq\, \frac{c 2^{jp}}{p(1-s)}\,\dashint_{B_j}w_j^p(x)\,{\rm d}x.
\end{eqnarray}

For the second term on the right in \eqref{sob+cacio}, we get
\begin{eqnarray}\label{10star2}
&& 
c\, r^{sp}\dashint_{B_j}\tilde{w}_j(y)\phi^p_j(y) \,{\rm d}y\left(\sup_{y\in{\rm supp}\,\phi_j}\int_{\mathds{R}^n\setminus B_j}K(x,y)\tilde{w}_j^{p-1}(x)\,{\rm d}x\right)
\nonumber \\[1ex]
&&\qquad\qquad  \qquad \leq c\, 2^{j(n+sp)}r^{sp}\left(\dashint_{B_j}\frac{w_j^p(y)}{(\tilde{k}_j-k_j)^{p-1}}\,{\rm d}y\right)\left(\int_{\mathds{R}^n\setminus B_j}\frac{w_j^{p-1}(x)}{|x_0-x|^{n+sp}}\,{\rm d}x\right) \nonumber \\[1ex]
&&\qquad\qquad  \qquad \leq c\, \frac{2^{j(n+sp+p-1)}}{\tilde{k}^{p-1}}\,\left[{\rm Tail}(w_0;x_0,r/2)\right]^{p-1}\, \dashint_{B_j}w_j^p(y)\,{\rm d}y,
\end{eqnarray}
where we have just used the definitions in \eqref{raggi}--\eqref{w_tilde}, the facts that $\tilde{w}_j\leq w_j^p/(\tilde{k}_j-k_j)^{p-1}$ and that $y\in \tilde{B}_j={\rm supp}\,\phi_j$ and $x\in \mathds{R}^n\setminus B_j$ yield
$$
\frac{|x-x_0|}{|x-y|}\,\leq\, \frac{|x-y|+|x_0-x|}{|x-y|}\,\leq \,1+\frac{\tilde{r}_j}{r_j-\tilde{r}_j}\,\leq\,  2^{j+4}.
$$

The left hand-side of \eqref{sob+cacio} can be estimated from below as follows
\begin{eqnarray}\label{10star3}
\left(\dashint_{B_j}|\tilde{w}_j(x)\phi_j(x)|^{p^*}\,{\rm d}x\right)^{\frac{p}{p^*}}
& \geq & (k_{j+1}-\tilde{k}_j)^{\frac{(p^*-p)p}{p^*}}\left(\dashint_{B_{j+1}} w_{j+1}^p(x)\,{\rm d}x\right)^{\frac{p}{p^*}}  \nonumber \\[1ex]
& = & \left(\frac{\tilde{k}}{2^{j+2}}\right)^{\frac{(p^*-p)p}{p^*}} \left(\dashint_{B_{j+1}} w_{j+1}^p(x)\,{\rm d}x\right)^{\frac{p}{p^*}}.
\end{eqnarray}

By combining~\eqref{sob+cacio} with~\eqref{10star}, \eqref{10star2} and \eqref{10star3},
we obtain
\begin{equation*}
\left(\frac{\tilde{k}^{1-{p}/{p^*}}}{2^{(j+2)\frac{(p^*-p)}{p^*}}}\right)^p
A_{j+1}^{\frac{p^2}{p^*}}
\, \leq \, 
c\, 2^{j(n+sp+p-1)}\left(\frac{1}{p(1-s)}+\frac{\left[{\rm Tail}(w_0;x_0,r/2)\right]^{p-1}}{\tilde{k}^{p-1}}
+1
\right)A_j^p,
\end{equation*}
where we have set 
$
{\displaystyle} A_j:= \left(\dashint_{B_{j}} w_{j}^p(x)\,{\rm d}x\right)^{\frac{1}{p}}.
$

Now, by taking
\begin{equation}\label{18se}
\tilde{k}\geq \delta {\rm Tail}(w_0;x_0,r/2), \qquad \delta \in (0,1],
\end{equation}
we get
\begin{equation}\label{18se2}
{\displaystyle}
\left(\frac{A_{j+1}}{\tilde{k}}\right)^{\!\frac{p}{p^\ast}} 
 \leq \,
\delta^{\frac{1-p}{p} } \bar{c}^{\frac{p}{p^\ast}} 2^{j\left(\frac{n+sp+p-1}{p}+\frac{sp}{n}\right)}\frac{A_j}{\tilde{k}},
\end{equation}
where
$\bar{c}=2^{\frac{2(p^\ast-p)}{p}} c^{\frac{p^\ast}{p^2}} {(2+(p(1-s))^{-1})}^{\frac{p^\ast}{p^2}} .$

Setting $\beta:=sp/(n-sp) = p^\ast/p-1>0$ and $C:= 2^{\frac{(n+sp+p-1)n}{p(n-sp)}+\frac{sp}{n-sp}}>1$, the estimate in~\eqref{18se2} becomes
$$
{\displaystyle}
\frac{A_{j+1}}{\tilde{k}} 
\, \leq \,
 \delta^{\frac{(1-p)p^\ast}{p^2} }\bar{c}\, C^j \left( \frac{A_j}{\tilde{k}} \right)^{\!1+\beta}
$$

Thus, it suffices to prove that the following estimate on $A_0$ does hold
\begin{equation*}
\frac{A_0}{\tilde{k}}
\, \leq \, \delta^{\frac{(p-1)p^\ast}{\beta p^2} } \bar{c}^{\!-\frac{1}{\beta}} C^{\!-\frac{1}{\beta^2}}
\end{equation*}
and, by a well-known iteration argument, it will follow
$A_j\rightarrow 0$ as $j\rightarrow \infty$.
Since
\[
\frac{(p-1)p^\ast}{\beta\, p^2} = \frac{p-1}{p} \frac{n}{n-ps} \frac{n-sp}{sp} = \frac{(p-1)n}{s p^2},
\]
we choose
\begin{equation*} 
\tilde{k}=\delta \, {\rm Tail}(w_0;x_0,r/2)+ \delta^{-\frac{(p-1)n}{s p^2}}H A_0,\quad  \text{with $H:=\bar{c}^{\frac{1}{\beta}} C^{\frac{1}{\beta^2}}$},
\end{equation*}
which is in accordance with~\eqref{18se}.

We deduce
\begin{eqnarray*}
\sup_{B_{r/2}} u & \leq & k+\tilde{k} \\[-4ex]
& = & k+\delta {\rm Tail}((u-k)_+;x_0,r/2)+\delta^{-\frac{(p-1)n}{s p^2}} H\left(\dashint_{B_r}(u-k)_+^p\right)^{\frac 1p},
\end{eqnarray*}
which finally gives the desired result by taking $k=0$.

 The remaining case, that is when $sp=n$, can be treated exactly as above, just replacing $p^\star$ by a suitable power~$q$ in the left hand-side of~\eqref{riviere} and consequently adjusting the exponents in the rest of the proof.
\end{proof}

\begin{rem}\label{rem_inf}
Similarly, it is possible to prove that the weak solutions to problem~\eqref{pb1} are locally bounded from below, satisfying an estimate analogous to the one in~\eqref{sup_estimate}. The proof is exactly as before: one has just to work with
$\tilde{w}_j=(\tilde{k}_j-u)_+$ and $w_j=(k_j-u)_+$ instead of the auxiliary functions defined in~\eqref{w_tilde} and make use of the corresponding Caccioppoli estimate \eqref{cacio1} for $w_-$.
\end{rem}

\vspace{3mm}
\section{H\"older continuity}\label{sec_holder}

This section is devoted to the proof of the H\"older continuity of solutions, namely Theorem~\ref{holder1}.
As in the local framework, an iteration lemma is the keypoint of the proof. However, as before, we have to handle the nonlocality of the involved operator and thus a certain care is required. In the proof below, all the estimates proven in previous sections will appear.

Before starting, let us fix some notation. For any $j\in \mathds{N}$, let $0<r<R/2$, for some $R$ such that $B_R(x_0)\subset\Omega$, 
$$
r_j:=\sigma^j\,\frac r2,\quad \sigma\in\left(0,1/4\right]\quad \text{and} \quad
B_j:=B_{r_j}(x_0).
$$

Moreover, let us define 
\begin{equation*}
\frac 12\, \omega(r_0)=\frac 12 \,\omega \left(\frac r 2\right):={\rm Tail}(u;x_0,r/2)+c\left(\dashint_{B_r}|u|^p\,{\rm d}x\right)^{\frac 1p},
\end{equation*}
with ${\rm Tail}(u;x_0,r/2)$ as in \eqref{Tail} and $c$ as in \eqref{sup_estimate},  and 
\begin{equation*}
\omega(r_j):=\left(\frac{r_j}{r_0}\right)^{\alpha}\,\omega(r_0),\quad\text{for some } \alpha<\frac{sp}{p-1}.
\end{equation*}

In order to prove the Theorem~\ref{holder1}, it will suffice to prove the following
\begin{lemma}\label{iteration}
Under the notation introduced above, let $u\in W^{s,p}(\mathds{R}^n)$ be the solution to  problem \eqref{pb1}. Then \begin{equation}\label{iter1}
\osc_{B_j} u\, \equiv \, \sup_{B_j} u-\inf_{B_j} u
\, \leq \, \omega(r_j),\quad \forall\, j=0,1,2,....
\end{equation}
\end{lemma}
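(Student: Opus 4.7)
The plan is induction on $j$. For the base case $j = 0$, I would apply Theorem~\ref{sup} on $B_{r/2}$ with $\delta = 1$ to $u$ and to $-u$ (using Remark~\ref{rem_inf}); adding the two estimates gives $\osc_{B_0} u \leq 2\,{\rm Tail}(u;x_0,r/2) + 2c(\dashint_{B_r}|u|^p\,{\rm d}x)^{1/p} = \omega(r_0)$, which is precisely the definition of $\omega(r_0)$.

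For the inductive step, I would assume \eqref{iter1} for $i = 0, \ldots, j$, set $m_j := \inf_{B_j} u$, $M_j := \sup_{B_j} u$ (so $M_j - m_j \leq \omega(r_j)$ and $u - m_j \geq 0$ on $B_j$), and distinguish the two symmetric alternatives $|\{u - m_j \geq \tfrac12 \omega(r_j)\} \cap B_{j+1}| \geq \tfrac12 |B_{j+1}|$ versus its analogue with $M_j - u$. Replacing $u$ by $m_j + M_j - u$ in the second case reduces to the first. Next I would apply Corollary~\ref{poincare+cacio} to $u - m_j$ on $B_R = B_j$, $B_r = B_{j+1}$ with the choices $a = \tfrac12 \omega(r_j)$, $d = \varepsilon\, \omega(r_j)$ (for a small $\varepsilon > 0$ to be fixed at the end), and $b = (a+d)/d \sim \varepsilon^{-1}$. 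Since the truncated logarithm $v$ vanishes on the majority half-set where $u - m_j \geq a$, a Chebyshev argument on $\{v \geq \tfrac12 \log b\}$ yields the measure decay
\[
\bigl|\{u - m_j < 2 d\} \cap B_{j+1}\bigr| \leq \frac{C}{(\log(1/\varepsilon))^p}\Bigl[1 + \varepsilon^{1-p} \sigma^{sp}\, \omega(r_j)^{1-p}\, {\rm Tail}\bigl((u-m_j)_-;x_0, r_j\bigr)^{p-1}\Bigr]\,|B_{j+1}|.
\]

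The decisive step is the uniform tail bound ${\rm Tail}((u-m_j)_-;x_0, r_j)^{p-1} \leq C\, \omega(r_j)^{p-1}$. I would decompose $\mathds{R}^n \setminus B_j$ as $\bigcup_{i=0}^{j-1}(B_i \setminus B_{i+1}) \cup (\mathds{R}^n \setminus B_0)$; on $B_i \setminus B_{i+1}$ the inductive hypothesis $m_j - m_i \leq \omega(r_i)$ gives $(u - m_j)_- \leq \omega(r_i)$, and summing the contributions produces a geometric series of ratio $\sigma^{sp - \alpha(p-1)}$, which converges precisely because $\alpha < sp/(p-1)$. The outer piece is estimated by $(u - m_j)_-(x) \leq |u(x)| + |m_j|$ and scaled by $(r_j/r_0)^{sp}$, the resulting quantities being absorbed into $\omega(r_j)^{p-1}$ via the base case. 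With this bound inserted, the right-hand side above becomes $C(\log(1/\varepsilon))^{-p}|B_{j+1}|$, which can be made as small as desired by choosing $\varepsilon$ small.

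Finally, I would run a standard De~Giorgi iteration to turn this smallness into $u \geq m_j + d$ on $B_{j+1}$. With levels $k_\ell := m_j + (1 + 2^{-\ell}) d$ and shrinking balls of radii $\rho_\ell := (1 + 2^{-\ell}) r_{j+1}$, I would test the Caccioppoli estimate of Theorem~\ref{cacio} (see Remark~\ref{rem_sub}) with $w_- := (u - k_\ell)_-$ and a cut-off supported in $B_{\rho_\ell}$, and couple it with the fractional Sobolev embedding to obtain a super-linear recursion for $A_\ell := (\dashint_{B_{\rho_\ell}} w_-^p\,{\rm d}x)^{1/p}$. The tail term in \eqref{cacio1} is once more controlled by the same dyadic decomposition against $\omega(r_j)^{p-1}$. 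Provided $A_0$ is small enough, which is guaranteed by the preceding measure estimate, the recursion forces $A_\ell \to 0$, so $u \geq m_j + d$ a.e.\ on $B_{j+1}$ and $\osc_{B_{j+1}} u \leq (1 - \varepsilon) \omega(r_j)$. Fixing $\alpha \in (0, sp/(p-1))$ and then $\sigma \in (0, 1/4]$ so that $\sigma^\alpha = 1 - \varepsilon$ closes the induction. The main obstacle throughout is the bookkeeping of the tail terms: both Corollary~\ref{poincare+cacio} and the Caccioppoli iteration introduce nonlocal contributions that must all be dominated by $\omega(r_j)^{p-1}$ uniformly in $j$, which is exactly where the restriction $\alpha < sp/(p-1)$ originates.
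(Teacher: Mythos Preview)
Your strategy matches the paper's proof step for step: induction, Theorem~\ref{sup} for the base case, the density alternative, Corollary~\ref{poincare+cacio} for a measure decay, and a De Giorgi iteration to obtain the pointwise lower bound and hence the oscillation reduction. Your dyadic tail decomposition is also the paper's device, and your remark that only the negative part $(u-m_j)_-$ enters in the log step is correct (and in fact slightly sharper than the paper, which bounds the full $|u_j|$ and therefore picks up an extra $\sigma^{-\alpha(p-1)}$).

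The gap is in the coupling of $\varepsilon,\sigma,\alpha$. In the De Giorgi step you write that the tail in~\eqref{cacio1} ``is once more controlled by the same dyadic decomposition against $\omega(r_j)^{p-1}$''. That bound is true but not enough: for the recursion on $A_\ell$ to have a smallness threshold \emph{independent of $\varepsilon$} one needs ${\rm Tail}(w_-;x_0,r_{j+1})^{p-1}\le c\,d^{p-1}=c\,[\varepsilon\,\omega(r_j)]^{p-1}$, not merely $c\,\omega(r_j)^{p-1}$; with only the latter the recursion carries an extra factor $\varepsilon^{1-p}$ and the threshold becomes polynomial in $\varepsilon$, which the logarithmic smallness from the measure estimate cannot beat. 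Splitting the tail over $B_j\setminus B_{j+1}$ and $\mathds{R}^n\setminus B_j$ gives $c\,d^{p-1}+c\,\sigma^{sp}\omega(r_j)^{p-1}$, so the required bound forces the coupling $\varepsilon\gtrsim\sigma^{sp/(p-1)}$. The very same coupling is what makes the bracket $1+C\varepsilon^{1-p}\sigma^{sp}$ in your displayed measure estimate bounded; your sentence ``the right-hand side becomes $C(\log(1/\varepsilon))^{-p}$'' already presupposes it. Consequently your closing order --- fix $\alpha$, then set $\sigma^\alpha=1-\varepsilon$ --- is backwards: taking $\varepsilon$ small with $\alpha$ fixed drives $\sigma=(1-\varepsilon)^{1/\alpha}$ toward $1$, violating both $\sigma\le 1/4$ and the coupling above. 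The paper instead sets $\varepsilon=\sigma^{sp/(p-1)-\alpha}$ from the outset, then chooses $\sigma$ small so that the measure estimate falls below the (now universal) De Giorgi threshold $\nu^\ast$, and only at the very end picks $\alpha$ small enough that $\sigma^\alpha\ge 1-\varepsilon$. With this correction your outline goes through; without it the iteration does not close.
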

\begin{proof}
We will proceed by induction. 
For this,  note that by the definition of $\omega(r_0)$ and Theorem~\ref{sup} (with $\delta=1$ there), the estimate in~\eqref{iter1} trivially holds for $j=0$, since, in particular, both the functions $(u)_+$ and $(-u)_+$ are weak subsolutions.

Now, we make a strong induction assumption and assume that \eqref{iter1} is valid for all $i \in \{0,\ldots , j\}$ for some $j \geq 0$, and then we prove that it  holds also for $j+1$. We have that either
\begin{equation}\label{hp_density1}
\frac{|2 B_{j+1}\cap \{u\geq \inf_{B_j} u + \omega(r_j)/2\}|}{|2 B_{j+1}|} \, \geq \, \frac12.
\end{equation}
or 
\begin{equation}\label{hp_density2}
\frac{|2 B_{j+1}\cap \{u \leq \inf_{B_j} u + \omega(r_j)/2\}|}{|2 B_{j+1}|} \, \geq \, \frac12
\end{equation} 
must hold.
If~\eqref{hp_density1} holds, we set $u_j := u - \inf_{B_j} u $, and if~\eqref{hp_density2} holds, we set $ u_j := \omega(r_j)-(u-\inf_{B_j} u)$.
In all cases we have that $u_j \geq 0$ in $B_j$ and 
\begin{equation}\label{hp_density}
\frac{|2 B_{j+1}\cap \{u_j \geq \omega(r_j)/2\}|}{|2 B_{j+1}|} \, \geq \, \frac12
\end{equation}
holds. Moreover, $u_j$ is a weak solution satisfying
\begin{equation} \label{eq:ind cor}
\sup_{B_i} |u_j| \leq 2 \omega(r_i) \qquad \forall \,\,i \, \in \{0,\ldots , j\}.
\end{equation}

We now claim that under the induction assumption we have
\begin{equation}\label{step1}
\left[{\rm Tail}(u_j;x_0,r_j)\right]^{p-1}\leq c\, \sigma^{-\alpha(p-1)}[\omega(r_j)]^{p-1},
\end{equation}
where the constant $c$ depends only on $n,p,s$ and the difference of $sp/(p-1)$ and $\alpha$, but, in particular, it is independent of $\sigma$. 
Indeed, we have
\begin{eqnarray*}\label{tail 1}
\left[{\rm Tail}(u_j; x_0, r_j)\right]^{p-1}& =&  r_j^{sp}\sum_{i=1}^j\int_{B_{i-1}\setminus B_i}|u_j(x)|^{p-1}|x-x_0|^{-n-sp}\,{\rm d}x \\
&& +\, r_j^{sp} \int_{\mathds{R}^n\setminus B_0} |u_j(x)|^{p-1}|x-x_0|^{-n-sp}\,{\rm d}x\\[1ex]
& \leq & r_j^{sp}\sum_{i=1}^j [\sup_{B_{i-1}} |u_j|]^{p-1}\int_{\mathds{R}^n\setminus B_i}|x-x_0|^{-n-sp}\,{\rm d}x\\
&& \quad  +\, r_j^{sp}\int_{\mathds{R}^n\setminus B_0} |u_j(x)|^{p-1}|x-x_0|^{-n-sp}\,{\rm d}x\\[1ex]
& \leq &  c\,\sum_{i=1}^j \left(\frac{r_j}{r_i}\right)^{sp} \,[\omega(r_{i-1})]^{p-1},
\end{eqnarray*}
where on the last line we used~\eqref{eq:ind cor} and 
\begin{eqnarray*} 
\notag
&& \hspace{-0.5cm}\int_{\mathds{R}^n\setminus B_0} |u_j(x)|^{p-1}|x-x_0|^{-n-sp}\,{\rm d}x 
\\ \notag && \quad \leq \ c r_0^{-sp}\sup_{B_0} |u|^{p-1} +  c r_0^{-sp}  [\omega(r_{0})]^{p-1}
+
c\int_{\mathds{R}^n\setminus B_0} |u(x)|^{p-1}|x-x_0|^{-n-sp}\,{\rm d}x
\\ \notag && \quad \leq c r_1^{-sp} [\omega(r_{0})]^{p-1}. \\
\end{eqnarray*}
Estimating further as
\begin{eqnarray*}\label{tail 2}
&& \sum_{i=1}^j \left(\frac{r_j}{r_i}\right)^{sp} \,[\omega(r_{i-1})]^{p-1} \\
&&\qquad = \  [\omega(r_0)]^{p-1}\left(\frac{r_j}{r_0}\right)^{\alpha(p-1)} \,\sum_{i=1}^j \left(\frac{r_{i-1}}{r_i}\right)^{\alpha(p-1)}\left(\frac{r_j}{r_i}\right)^{sp-\alpha(p-1)} \\
&& \qquad = \  [\omega(r_j)]^{p-1} \,\sigma^{-\alpha(p-1)}\,\sum_{i=0}^{j-1}\sigma^{i
(sp-\alpha(p-1))} \\[1ex]
&& \qquad \leq \  [\omega(r_j)]^{p-1} \,  \frac{\sigma^{-\alpha(p-1)}}{1-\sigma^{sp-\alpha(p-1)}  }
\\[1ex]
&&\qquad \leq \ \frac{4^{sp-\alpha(p-1)}}{\log(4)(sp-\alpha(p-1))} \, \sigma^{-\alpha(p-1)} \,[\omega(r_j)]^{p-1},
\end{eqnarray*}
where we have used the fact that $\sigma \leq 1/4$ and $\alpha<~sp/(p-1)$. 
Hence~\eqref{step1} is proved with  $c$ depending only on $n,p,s$ and the difference of $sp/(p-1)$ and $\alpha$.

Next, consider the function $v$ defined as follows
\begin{equation}\label{def_v}
v:=\min\left\{\left[\log\left(\frac{\omega(r_j)/2+d}{u_j+d}\right)\right]_+\!, \, k\right\}, \qquad k>0.
\end{equation}  
Applying  then Corollary~\ref{poincare+cacio}, obviously with $a \equiv \omega(r_j)/2$ and $b \equiv \exp(k)$, we get
$$
\dashint_{2B_{j+1}}|v-(v)_{2B_{j+1}}|^p\,{\rm d}x 
\, \leq \, c\, \left\{ d^{1-p} \left( \frac{r_{j+1}}{r_j} \right)^{sp}[{\rm Tail}(u_j;x_0, r_j)]^{p-1}+1\right\}.
$$
Thus, as a consequence of the estimate in \eqref{step1},
we arrive at
$$
\dashint_{2B_{j+1}}|v-(v)_{2B_{j+1}}|^p\,{\rm d}x
\, \leq \, c\left\{d^{1-p}\sigma^{sp-\alpha(p-1)}[\omega(r_j)]^{p-1}+1\right\}.
$$
Therefore, choosing $d=\varepsilon\, \omega(r_j)$ with
\begin{equation*}
\eps := \sigma^{\frac{sp}{p-1}-\alpha},
\end{equation*}
we get
\begin{equation}\label{est_v}
\dashint_{2B_{j+1}}|v-(v)_{2B_{j+1}}|\,{\rm d}x\leq c,
\end{equation}
where the constant $c$ depends only on $n,p,s,\lambda,\Lambda$ and the difference of $sp/(p-1)$ and $\alpha$. 

To continue, denote in short 
 $\tilde{B}\equiv 2B_{j+1}$, and 
follow the path paved in \cite[Lemma 2.107]{MZ97}, together with~\eqref{hp_density} and the definition of $v$ given in \eqref{def_v}. We obtain
\begin{eqnarray*}
k&=&\frac{1}{|\tilde{B}\cap \{u_j \geq \omega(r_j)/2\}|}\int_{\tilde{B}\,\cap\, \{u_j \geq \omega(r_j)/2\}} k\,{\rm d}x\\[1ex]
&=&\frac{1}{|\tilde{B}\cap \{u_j \geq \omega(r_j)/2\}|} \int_{\tilde{B}\,\cap\, \{v=0\}} k\,{\rm d}x\\[1ex]
&\leq &\frac{2}{|\tilde{B}|}\int_{\tilde{B}} (k-v)\,{\rm d}x
\ = \ 2[k-(v)_{\tilde{B}}].
\end{eqnarray*}
By integrating the preceding inequality over the set $\tilde{B}\cap\{v=k\}$ we obtain
\begin{eqnarray*}
\frac{|\tilde{B}\cap \{v=k\}|}{|\tilde{B}|} \,k&\leq& \frac{2}{|\tilde{B}|}\int_{\tilde{B}\,\cap\, \{v=k\}} [k-(v)_{\tilde{B}}]\,{\rm d}x\\[1ex]
&\leq &\frac{2}{|\tilde{B}|}\int_{\tilde{B}} |v-(v)_{\tilde{B}}|\,{\rm d}x\leq c,
\end{eqnarray*}
thanks to \eqref{est_v}. Let us take
$$
k=\log\left(\frac{\omega(r_j)/2+\varepsilon \,\omega(r_j)}{3\,\varepsilon\,\omega(r_j)}\right)=\log\left(\frac{1/2+\varepsilon}{3\,\varepsilon}\right)\approx \log \left(\frac 1\varepsilon\right),
$$ 
so that
$$
\frac{|\tilde{B}\cap \{v=k\}|}{|\tilde{B}|} \,k \, \leq \, c
$$
yields
\begin{equation}\label{MZ}
\frac{|\tilde{B}\cap \{u_j \leq 2\,\varepsilon\,\omega(r_j)\}|}{|\tilde{B}|}
\,  \leq \, \frac c k 
\leq  \frac{c_{\log}}{\log \left(\frac 1\sigma\right)},
\end{equation}
where the constant $c_{\log}$ depends only on $n,p,s,\lambda,\Lambda$ and the difference of $sp/(p-1)$ and $\alpha$ via the definition of $\eps$.

We are now in a position to start a suitable iteration to deduce the desired oscillation reduction. First, for any $i=0,1,2,...$, we define
\begin{equation*} 
\varrho_i=r_{j+1}+2^{-i}r_{j+1},\quad \tilde{\varrho}_i := \frac{\varrho_{i} +\varrho_{i+1}}{2}, \quad  B^i=B_{\varrho_i}, \quad \tilde B^i=B_{\tilde \varrho_i}
\end{equation*}
and corresponding cut-off functions
\begin{equation*}
\phi_i\in C_0^\infty(\tilde B^i), \,\,\, 0\leq \phi_i\leq 1,\,\,\,  \phi_i\equiv 1\text{ on } B^{i+1}, \,\,\,\text{and} \,\,\, |D\phi_i|<c\,\varrho_i^{-1}.
\end{equation*}
Furthemore, set
\begin{equation*} 
k_i=(1+2^{-i})\varepsilon\,\omega(r_j),\qquad  w_i := (k_i-u_j)_+,
\end{equation*}
and
\begin{equation*}     
A_i=\frac{|B^{i}\cap \{u_j \leq k_i\}|}{|B^{i}|} = \frac{|B^{i}\cap \{w_i > 0\}|}{|B^{i}|}.
\end{equation*}
The Caccioppoli inequality in~\eqref{cacio1} now yields 
\begin{eqnarray}\label{cacio 2}
\nonumber && \int_{B^i}\int_{B^i}K(x,y)|w_i (x)\phi_i(x)-w_{i}(y)\phi_i(y)|^p \,{\rm d}x{\rm d}y\\
 &&\qquad \qquad  \leq c\int_{B^i}\int_{B^i}K(x,y)
 (\max\{w_{i}(x),w_i(y)\})^p |\phi_i(x)-\phi_i(y)|^p \,{\rm d}x{\rm d}y\\
&&\qquad \qquad \quad+\,c \,\int_{B^i}w_{i}(x)\phi_i^p(x)\,{\rm d}x \left(\sup_{y\,\in\, \tilde B^i}\int_{\mathds{R}^n\setminus B^i }K(x,y)w_{i}^{p-1}(x)\,{\rm d}x \right).
\nonumber
\end{eqnarray}
We can estimate the term on the left below as
\begin{eqnarray}\label{15star}
A_{i+1}^{\frac{p}{p^*}}(k_i-k_{i+1})^p &=& \frac{1}{|B^{i+1}|^{\frac{p}{p^*}}}\left(\int_{B^{i+1}\,\cap\, \{u_j\leq k_{i+1}\}}(k_i-k_{i+1})^{p^*}\phi_i^{p^*}(x)\,{\rm d}x\right)^{\frac{p}{p^*}} \nonumber\\[1ex]
&\leq & \frac{1}{|B^{i+1}|^{\frac{p}{p^*}}}\left(\int_{B^{i}}w_i^{p^*}(x)\phi_i^{p^*}(x)\,{\rm d}x\right)^{\frac{p}{p^*}} \nonumber\\[1ex]
&\leq&c \, r_{j+1}^{sp-n}\int_{B^i}\int_{B^i}K(x,y)|w_i(x)\phi_i(x)-w_i(y)\phi_i(y)|^p  \,{\rm d}x{\rm d}y.
\end{eqnarray}
Recalling that $|D\phi_i |\leq c 2^{i} r_{j+1}^{-1}$, the first term on the right in~\eqref{cacio 2} can be treated as follows, 
\begin{eqnarray}
&& \notag r_{j+1}^{sp} \int_{B^{i}}\int_{B^i}K(x,y) (\max\{w_{i}(x),w_i(y)\})^p |\phi_i(x)-\phi_i(y)|^p\,{\rm d}x{\rm d}y \\
&& \notag \qquad \leq  c \,2^{ip} r_{j+1}^{sp} r_{j+1}^{-p}\,k_i^p \int_{B^i\,\cap\,\{u_j\leq k_i\}}\int_{B^i}\frac{1}{|x-y|^{-p+n+sp}}\,{\rm d}y{\rm d}x\\[1ex]
&& \label{est on right 1} \qquad  \leq c\,  2^{ip} \left[ \eps \omega(r_j) \right]^p\,|B^i\cap \{u_j\leq k_i\}|.
\end{eqnarray}
Moreover, 
\begin{equation} \label{est on right 2}
\int_{B^i} w_i(x)\phi_i^p(x)\,{\rm d}x
\, \leq \, 
c \left[ \eps \omega(r_j) \right] |B^i\cap \{u_j\leq k_i\}|
\end{equation}
holds. To tackle the third integral in~\eqref{cacio 2}, we first have
\begin{equation}\label{taaa}
r_{j+1}^{sp}\left(\sup_{y\,\in\, \tilde B^i}\int_{\mathds{R}^n\setminus B^i }K(x,y)w_{i}^{p-1}(x)\,{\rm d}x \right) \, \leq \, c 2^{i(n+sp)} \left[{\rm Tail}(w_i;x_0,r_{j+1})\right]^{p-1}, 
\end{equation}
using
\begin{equation*}
\inf_{y \in \tilde B^i} |y-x| \, \geq \,  |x_0-x| \inf_{y \in \tilde B^i}\frac{|y-x|}{|x_0-x|}
\, \geq \, 2^{-i-1} |x-x_0|
\end{equation*}
for all $x \in \mathds{R}^n\setminus B^i $ and the fact that
$$
B_{r_{j+1}}\equiv B_{j+1}\subset B^i \quad \Rightarrow \quad \mathds{R}^n\setminus B^i \subset \mathds{R}^n\setminus B_{j+1}. 
$$
Recalling~\eqref{step1} and the facts that $w_i \leq 2\eps \omega(r_j)$ in $B_{j}$ and $w_i \leq |u_j| + 2\eps \omega(r_j)$ in $\mathds{R}^n$, we further get
\begin{eqnarray*} 
\notag && \left[{\rm Tail}(w_i;x_0,r_{j+1})\right]^{p-1}
\\[1ex] \notag && \qquad \leq cr_{j+1}^{sp}\int_{B_{j}\setminus B_{j+1}} w_i^{p-1}(x) |x-x_0|^{-n-sp} \, {\rm d}x +  c \left(\frac{r_{j+1}}{r_j} \right)^{sp} \left[{\rm Tail}(w_i;x_0,r_{j})\right]^{p-1}
\\[1ex] \notag && \qquad \leq c \eps^{p-1} \omega(r_j)^{p-1}  + c \sigma^{sp}\left[{\rm Tail}(u_j;x_0,r_{j})\right]^{p-1}
\\[1ex] \notag && \qquad \leq c\left(1 + \frac{\sigma^{sp-\alpha(p-1)}}{\eps^{p-1}} \right) \left [\eps \omega(r_j)\right]^{p-1}
\\[1ex] \notag && \qquad \leq c \left [\eps \omega(r_j)\right]^{p-1},
\label{eq:} 
\end{eqnarray*}
by the very definition of $\eps$.  Combining the estimates above, we deduce that
\begin{equation} \label{est on right 3}
r_{j+1}^{sp}  \left(\sup_{y\,\in\, \tilde B^i}\int_{\mathds{R}^n\setminus B^i }K(x,y)w_{i}^{p-1}(x)\,{\rm d}x\right)  \, \leq  \,
c \, 2^{i(n+sp)}  \left [\eps \omega(r_j)\right]^{p-1}.
\end{equation}

Putting together~\eqref{cacio 2},~\eqref{15star}~\eqref{est on right 1},~\eqref{est on right 2} and~\eqref{est on right 3}, we arrive at 
$$
A_{i+1}^{\frac{p}{p^\ast}}(k_i-k_{i+1})^p \, \leq  \, c\, 2^{i(n+sp+p)}  \left [\eps \omega(r_j)\right]^{p}  A_i,
$$
which yields
\begin{equation*}
A_{i+1}\, \leq \, c\, 2^{i\,[n+(2+s)p]p^\ast/p} A_i^{1+\beta}
\end{equation*}
with $\beta :=sp/(n-sp)$ by the definition of $k_i$'s. Now, we recall that  if we prove the following estimate on $A_0$, 
\begin{equation}\label{A0}
A_0=\frac{|\tilde{B}\cap \{u_j\leq 2\varepsilon \omega(r_j)\}|}{|\tilde{B}|}\leq c^{-1/\beta}2^{-[n+(2+s)p]p^\ast/[p\beta^2]} =: \nu^\ast,
\end{equation}
then we can deduce that
$$
A_i\to 0 \quad \text{as} \quad i\to \infty.
$$
Indeed, the condition~\eqref{A0} we can guarantee by~\eqref{MZ} choosing 
\[
\sigma = \min\{1/4, \, \exp(- c_{\log}/\nu^\ast)\},
\]
which then depends only on $n,p,s,\lambda,\Lambda$ and the difference of $sp/(p-1)$ and $\alpha$. In other words, we have shown that
\[
\osc_{B_{j+1}} u \leq (1-\eps) \omega(r_j)=(1-\eps)\left(\frac{r_j}{r_{j+1}}\right)^\alpha \omega(r_{j+1})=(1-\eps)\sigma^{-\alpha} \omega(r_{j+1}).
\]
Taking finally $\alpha \in \left(0,\frac{sp}{p-1}\right)$ small enough satisfying
\[
\sigma^\alpha \geq 1-\eps =  1 - \sigma^{\frac{sp}{p-1}-\alpha},
\]
then, clearly, $\alpha$ depends only on $n,p,s,\lambda,\Lambda$ and 
\[
\osc_{B_{j+1}} u \leq \omega(r_{j+1})
\]
holds, proving the induction step and finishing the proof. 
\end{proof}

\noindent
\\ {\bf Acknowledgements.}\
The authors have been supported by the \href{http://prmat.math.unipr.it/~rivista/eventi/2010/ERC-VP/}{ERC grant 207573 ``Vectorial Problems''}. The second author  has also been supported by Academy of Finland project ``Regularity theory for nonlinear parabolic partial differential equations", and the third author by PRIN 2010-11 ``Calcolo delle Variazioni''. The first and the third authors are members of Gruppo Nazionale per l'Analisi Matematica, la Probabilit\`a e le loro Applicazioni (GNAMPA) of Istituto Nazionale di Alta Matematica ``F.~Severi'' (INdAM), whose support is acknowledged.

 We would like to thank Lorenzo Brasco and Enea Parini for careful reading of a preliminary version of the manuscript. Finally, we would like to thank the  referees for their  useful suggestions, which allowed to improve the manuscript.

\vspace{1mm}

\vspace{3mm}
\end{document}